\author{Antoine Ducros}
\address{Sorbonne Université, Université Paris-Diderot, CNRS, Institut de Mathématiques de Jussieu-Paris
Rive Gauche, Campus Pierre et Marie Curie, case 247, 4 place Jussieu, 75252 Paris cedex 5, France}
\email{antoine.ducros\at imj-prg.fr}
\urladdr{https://webusers.imj-prg.fr/$\sim$antoine.ducros/}
\author{Ehud Hrushovski}
\address{Mathematical Institute,
University of Oxford,
Andrew Wiles Building,
Radcliffe Observatory Quarter
Woodstock Road, 
Oxford
OX2 6GG, UK}
\email{Ehud.Hrushovski\at maths.ox.ac.uk}
\urladdr{https://www.maths.ox.ac.uk/people/ehud.hrushovski}
\author{François Loeser}
\address{Institut universitaire de France, Sorbonne Université, Institut de Mathématiques de Jussieu-Paris
Rive Gauche CNRS, Campus Pierre et Marie Curie, case 247, 4 place Jussieu, 75252 Paris cedex 5, France
}
\email{francois.loeser@imj-prg.fr}
\urladdr{https://webusers.imj-prg.fr/$\sim$francois.loeser/}
\author{Jinhe Ye}
\address{Mathematical Institute, University of Oxford, Andrew Wiles Building, Radcliffe Observatory Quarter Woodstock Road, Oxford OX2 6GG, UK}
\email{Jinhe.ye@maths.ox.ac.uk}
\urladdr{https://sites.google.com/view/vincentye}
\title{Tropical functions on a skeleton}
\newcommand{\eg}{e.\@g.\@}
\newcommand{\ie}{i.\@e.\@}
\newcommand{\opcit}{op.\@~cit.\@}
\newcommand{\resp}{resp.\@~}
\newcommand{\abs}[1]{\mathopen|#1\mathclose|}
\newcommand{\an}{^{\mathrm{an}}}
\newcommand{\gm}{\mathbf G_{\mathrm m}}
\newcommand{\cl}{\mathrm{cl}}
\newcommand{\val}{\mathrm{val}}
\newcommand{\tp}{\mathrm{tp\;}}
\newcommand{\acvsf}{\mathrm{ACV}^2\mathrm{F}}
\newcommand{\vsf}{\mathrm{V}^2\mathrm{F}}
\newcommand{\acvf}{\mathrm{ACVF}}
\newcommand{\DOAG}{\mathrm{DOAG}}
\newcommand{\RES}{\mathrm{RES}}
\newcommand{\res}{\mathrm{res}}
\newcommand{\VF}{\mathrm{VF}}
\newcommand{\F}{\mathbf F}
\renewcommand{\P}{\mathbf P}
\newcommand{\R}{\mathbf R}
\newcommand{\Z}{\mathbf Z}
\renewcommand{\SS}{\mathbb S}
\renewcommand{\H}{\mathrm H}
\renewcommand{\phi}{\varphi}
\renewcommand{\epsilon}{\varepsilon}
\renewcommand{\leq}{\leqslant}
\renewcommand{\geq}{\geqslant}
\renewcommand{\subset}{\subseteq}
\theoremstyle{plain} \newtheorem{claim}[subsection]{Claim}
 \newtheorem*{mtheo}{Main Theorem}
\def\stda#1{{#1}^{\#}}
\begin{document}
\begin{abstract}
We prove a general finiteness statement for the ordered abelian group of tropical functions on skeleta in Berkovich analytifications of algebraic varieties. Our approach consists in working in the framework of stable completions of algebraic varieties, a model-theoretic version of Berkovich analy\-tifications, for which we prove a similar result, of which the former one is a 
consequence.
\end{abstract}

\maketitle

\tableofcontents

\section{Introduction}
\subsection{The general context: skeleta in Berkovich geometry}
Let $F$ be a complete non-archimedean field. Among the several
frameworks available
for doing analytic geometry over $F$ (Tate, Raynaud, Berkovich, Huber\ldots),
Berkovich's is the one that encapsulates in the most natural way the deep
links between non-archimedean and tropical (or polyhedral) geometry.

Indeed, every Berkovich space $X$ over $F$ contains plenty of natural ``tropical" subspaces, 
which are called \textit{skeleta}. Roughly speaking, a skeleton
of $X$ is a subset $S$ of $X$ on which the sheaf of functions of the form
$\log \abs f$ with $f$ a section of $\mathscr O_X^\times$ induces a piecewise linear structure; \ie, using such functions one can equip $S$ 
with a piecewise linear atlas, whose charts are modelled on
(rational) polyhedra and whose transition maps are piecewise affine (with rational linear part). 

This definition is rather abstract,
but there are plenty of concrete examples of skeleta.
The prototype of such objects is the ``standard skeleton"
$S_n$ of $(\gm^n)\an$,
that consists of all Gauss norms with arbitrary
real parameters; the family $(\log \abs{T_1},\ldots, \log
\abs{T_n})$ induces a piecewise-linear isomorphism $S_n\simeq \mathbb{R}^n$. 

Now if $X$ is an arbitrary analytic space
and if $\phi_1,\ldots, \phi_m$ are
quasi-finite maps from $X$ to $(\gm^n)\an$, then 
$\bigcup_j \phi_j^{-1}(S_n)$ is a skeleton by \cite{confluentes}, 
Theorem 5.1 (it consists only of
points whose Zariski-closure is $n$-dimensional, so
it is empty if $\dim X<n$), and $\phi_j^{-1}(S_n)
\to S_n$ is a piecewise immersion for all $j$; of
course, every piecewise-linear subspace
of $\bigcup_j \phi_j^{-1}(S_n)$ is still a skeleton. 

Skeleta were introduced by Berkovich in his 
seminal work \cite{berkovich1999} on the homotopy type of analytic spaces, where he proved
that any compact analytic space with a polystable formal model admits a deformation retraction
to a skeleton (isomorphic to the dual complex of the special fiber), and used it to show that quasi-smooth analytic spaces
are locally contractible; they play a key role
in the theory of real integration on Berkovich spaces \cite{chambertloir-d2012}. Let us mention that all skeleta encountered in these works are at least locally of the form described above; \ie, piecewise-linear subspaces of finite unions $\bigcup\phi_j^{-1}(S_n)$ for quasi-finite maps $\phi_j\colon X\to (\gm^n)\an$.

\subsection{Our main result}\label{sec:main-result}
If $S$ is a skeleton of an analytic space
$X$ and if $f$
is a regular invertible function
defined on a neighborhood of $S$, then $\log \abs f$
is a piecewise-linear function 
on $S$, and our purpose is to understand
what are the piecewise linear functions
on $S$ that can arise this way in the \textit{algebraic}
situation. 

Let us make precise what we mean. Let $X$ be an \textit{algebraic}
variety over $F$, say irreducible of
dimension $n$; let us call \textit{log-rational}
any real-valued function  of the form $\log \abs f$
for $f$ a non-zero rational function on $X$,
viewed as defined over $U\an$ for $U$ the maximal
open subset of $X$ on which $f$
is well-defined and invertible.
Let $\phi_1,\ldots, \phi_m$ be (algebraic)
quasi-finite maps from $X$ to $\gm^n$
(the corresponding analytic maps
will also be denoted $\phi_1,\ldots, \phi_m$). 
Let $S$ be a subset of the skeleton $\bigcup \phi_j^{-1}(S_n)$
defined by a Boolean combination of
inequalities between log-rational functions. 
Our main theorem is the following finiteness result. 

\begin{mtheo}[Berkovich setting]
 Let $X$ be an irreducible algebraic
variety over $F$ of
dimension $n$ and assume $F$ is algebraically closed. Let $S$ be as above. Then there exists finitely many non-zero rational functions
$f_1,\ldots, f_\ell$ on $X$ such that the following holds. 

\begin{enumerate}[1]
\item The functions $\log \abs{f_1},\ldots, \log \abs{f_\ell}$ identify $S$
with a piecewise-linear subset of $\mathbb{R}^\ell$ (\ie, a 
subset defined by a Boolean combination
of inequalities between $\mathbb{Q}$-affine functions). 
\item The group of restrictions of log-rational functions to $S$
is stable under $\min$ and $\max$
and is generated
under addition, substraction,
$\min$ and $\max$ by the (restrictions of the) functions $\log \abs{f_i}$ and the constants $\log \abs a$ for
$a\in F^\times$. 
\end{enumerate}
\end{mtheo}
Let us mention that statement (1) is
implicitly established in \cite{confluentes}
(see \opcit, proof of Theorem 5.1);
what is really new here is statement
(2). And let us insist on the assumption
that $F$ is algebraically closed:
for a general $F$ the theorem does
not hold, as shown by a
counter-example
due to Michael Temkin 
(Remark \ref{contrex}).

\subsection{About our proof}\label{sec:about-proof}
In fact, we do not work directly with Berkovich spaces 
but with the model-theoretic avatar of this geometry, namely the theory of  \textit{stable completions} of algebraic varieties which was introduced by two of the authors in \cite{HL}.
Thus, what we actually prove is Theorem \ref{themaintheorem} which is a version of the result above in this model-theoretic framework -- the final transfer to Berkovich spaces being straightforward. 

Let us give some explanations. Let $X$ be an algebraic variety over a valued field $F$.
We denote by $\widehat X$ the stable completion
of $X$. The standard skeleton $S_n$ of $(\gm^n)\an$
has a natural counterpart $\Sigma_n$
in $\widehat{\gm^n}$, 
and $\bigcup \phi_j^{-1}(\Sigma_n)$
makes sense as a subset of $\widehat X$;
moreover, the inequalities between log-regular functions
that cut out $S$ inside $\bigcup \phi_j^{-1}(S_n)$ also
make sense here, and cut out a subset $\Sigma$ of 
 $\bigcup \phi_j^{-1}(\Sigma_n)$.
 By  Theorem \ref{theo-union-skeleta}, this subset is 
 $F$-definably homeomorphic to
 an $F$-definable subset of $\Gamma^N$
 for some $N$. It follows moreover from its construction 
 that $\Sigma$ is contained in the subset $X^\#$ of $\widehat X$
 consisting of strongly stably dominated types (or, in other words,
 of Abhyankar valuations), and even in
 its subset $X^{\#}_{\mathrm{gen}}$ of Zariski-generic points.
We can now state  Theorem \ref{themaintheorem}. 
Let us just precise that
what we call a \textit{val-rational} function 
is
a $\Gamma$-valued function of the form $\val (f)$ with $f$ a non-zero rational function on $X$ (here $\val(f)$ is seen as
defined on the stable completion of the invertibility locus of $f$.)

\begin{mtheo}[Model-theoretic setting]
Let $F$ be an  algebraically 
closed field endowed with a 
valuation
$\val : F \to \Gamma \cup \{\infty\}$. Let $X$ be an irreducible algebraic variety 
over $F$. Let $\Upsilon$ be an iso-definable
subset of $X^{\#}_{\mathrm{gen}}$ which is $\Gamma$-internal, that is, 
$F$-definably isomorphic to an $F$-definable
subset
of $\Gamma^N$ for some  $N$. 

There exists finitely many non-zero rational functions
$f_1,\ldots, f_\ell$ on $X$ such that the following holds. 

\begin{enumerate}[1]
\item The functions $\val (f_1),\ldots, \val ({f_\ell})$ identify
topologically $\Upsilon$
with an $F$-definable subset of $\Gamma^\ell$. 

\item The group of restrictions of val-rational functions to $\Upsilon$
is stable under $\min$ and $\max$ and generated
under addition, substraction, $\min$ and $\max$ by the (restrictions of the) functions $\val (f_i)$ and the constants $\val  (a)$ for
$a\in F^\times$.
\end{enumerate}
\end{mtheo}

Let us start with a remark.
The $\Gamma$-internal subsets
we are really interested
in for application
to Berkovich theory
seem to be of a very specific form
(they are definable subsets of
$\bigcup \phi_j^{-1}(\Sigma_n)$
for some family $(\phi_j)$
of quasi-finite maps from
$X$ to $\gm^n$)
and our main theorem
deals at first sight
with far more general 
$\Gamma$-internal subsets.
But this is somehow
delusive; indeed, we show
(Theorem \ref{structure-pure-gammainternal})
that every $\Gamma$-internal
subset of $X^\#_{\mathrm{gen}}$
is contained in some finite union
$\bigcup \phi_j^{-1}(\Sigma_n)$ as above. 

We are now going to describe roughly the main steps of
the proof of our main theorem. 

\subsubsection*{Step 1} This first step has nothing to do with valued fields and concerns 
general divisible abelian ordered groups. Basically, one proves the
following. Let $D$ be an $M$-definable
closed subset of $\Gamma^n$ for some divisible
ordered group $M$ contained in a model $\Gamma$
of $\DOAG$, let $g_1,\ldots, g_m$ be $\mathbb{Q}$-affine
$M$-definable functions on $\Gamma^n$, and let $f$ be any continuous
\textit{and Lipschitz} $M$-definable map from $D$ to $\Gamma$, such that 
for every $x$ in $D$ there is some index $i$ with $f(x)=g_i(x)$.
Then under
these assumptions, $f$ lies in the set of functions
from $D$ to $\Gamma$ generated
under addition, substraction, min and max by
the $g_i$, the coordinate functions 
and $M$: this is Theorem \ref{thm:lip-t-gen}. Here
the Lipschitz condition refers to a
Lipschitz constant in $\mathbb{Z}_{\geq 0}$, 
so that it is a void  condition when  $M$
has no non-trivial convex subgroup
and $D$ is definably compact, but meaningful in general.

\subsubsection*{Step 2} We start with proving a finiteness result in the spirit of our theorem under a weaker notion of generation. More precisely, we show
(Theorem \ref{theo-tfinite-pure})
the existence
of $f_1,\ldots, f_\ell$ as in our statement
such that (1) holds and such that the following weak version
of (2) holds, with $H$ denoting
the group of
$\Gamma$-valued functions
on $\Upsilon$ generated by the $\val (f_i)$ and the
constants $\val (a)$ for $a\in F^\times$ : \textit{for every
non-zero rational function $g$
on $X$ there exist finitely many
elements $h_1,\ldots, h_r$
of $H$ such that $\Upsilon$ is covered by its definable subsets $\{\val (g)=\val (h_i)\}$
for $i=1,\ldots, r$.}

The key point for this step is the  purely valuation-theoretic
fact that an Abhyankar extension of a defectless valued field is still defectless.
It has been given several proofs in the literature,
some of which are purely algebraic, some of which are
more geometric. For the sake of completeness and for consistency with the general viewpoint of this paper, we give a new one in  Appendix \ref{app:abh}, 
(Theorem \ref{theo-abhyankar-defectless}) which is model-theoretic and based upon \cite{HL}. 
It follows already from Theorem \ref{theo-tfinite-pure}  that skeleta are endowed with a canonical piecewise
$\mathbb{Z}$-affine structure. In particular this implies the existence of canonical volumes for skeleta as we spell out in  Section \ref{sec:volumes}.

\subsubsection*{Step 3}
One strengthens the statement of
Step 2 by showing (Proposition \ref{speandlip})
that the $f_i$  can even be chosen so that all
functions $(\val (g))|_\Upsilon$ as above are Lipschitz, when seen as functions
on $\val (f) (\Upsilon)\subset\Gamma^m$. 
This
is done as follows. First, by possibly replacing the ground field with a smaller one over which everything 
is defined, we can assume that $\val(F^\times)$ has only finitely many convex subgroups. Under
this assumption we can achieve by enlarging $f$
that $\val (f)$ induces an embedding $\Upsilon(F')\hookrightarrow \Gamma^m(F')$ for every coarsening $F'$ of $F$
(by a coarsening, we mean that $F'$ has the same underlying field as  $F$ and a coarser valuation); then
for every valued
algebraically closed extension $L$ of $F$ and every coarsening $L'$ of $L$ the map $\Upsilon(L')\to \Gamma(L')^n$
induced by $\val (f)$ will be injective, which implies the sought after Lipschitz property by an easy compactness argument.

\subsubsection*{Step 4}
One proves that the set of functions on $\Upsilon$ of the form $\val (g)$ is stable under $\min$ and $\max$. 
This follows from orthogonality between the residue field and the value group sorts
in $\acvf$, see Lemma \ref{lem-stable-min}. 

\subsubsection*{Step 5}
 By the very choice of the $f_i$, every function $\val(g)|_{\Upsilon}$ 
gives rise \textit{via} the embedding $\val(f)|_{\Upsilon}$ to a definable function on $\val (f)(\Upsilon)$ that
belongs piecewise to the group
generated by
$\val (F^\times)$ and the coordinate functions $x_1,\ldots,x_\ell$ (Step 2) and is moreover Lipschitz (Step 3); it is thus (Step 1) equal to
$t(x_1,\ldots, x_\ell,a)$ where $t$ is a term in $\{+,-,\min,\max\}$ and $a$ a tuple of elements of $\val(F^\times)$. Then $\val (g)|_{\Upsilon}
=t(\val(f_1)|_{\Upsilon}, \ldots, \val(f_\ell)|_{\Upsilon},a)$ and we are done.

\subsection*{Acknowledgements}The first and third authors  were partially supported  by ANR-15-CE40-0008 (D\'efig\'eo).
Part of this work was done
during a stay of the first author at Weizmann Institute, funded by a Jakob Michael visiting professorship.
The third author  was partially supported  by the Institut Universitaire de France.
The fourth author was partially supported  by  the Fondation Sciences Math\'ematiques de Paris and GeoMod AAPG2019 (ANR-DFG), \textit{Geometric and
Combinatorial Configurations in Model Theory}.
We are very grateful to Michael Temkin for communicating us the example in Remark \ref{contrex}.
We are also very grateful to the referees
for their careful reading and for their many comments that helped us to improve considerably the readability of the paper.

%\newpage

\section{Preliminaries}\label{section2}
\subsection{Stably dominated types}The aim of this section is to review some of the material from \cite{HL} that we will use in this paper.
The reader is refered to \cite{HL}  or to the surveys \cite{bbk}  or \cite{simons} for more detailed information.
In this paper, we shall work  in the framework of \cite{HL}, namely the theory $\acvf$ of 
algebraically closed valued fields $K$ with nontrivial valuation
in
the geometric language ${\mathcal L}_{\mathcal{G}}$
of
\cite{haskell-h-m2006}. We recall that this language is an extension
of the classical three-sorted language
with sorts $\VF$, $\Gamma$ and $\RES$ for the valued field, value group and residue field sorts, 
and additional symbols $\val$ and $\res$ for the valuation and residue maps, obtained by adding new sorts
$S_m$ and $T_m$, $m \geq 1$, corresponding respectively to 
lattices in $K^m$ and to the elements of the reduction of such lattices modulo the maximal ideal of the valuation ring.
By the main result of \cite{haskell-h-m2006} $\acvf$  has elimination of imaginaries in ${\mathcal L}_{\mathcal{G}}$.

Recall that in a theory $T$ admitting elimination of imaginaries in a given language $\mathcal{L}$,
for $M \models T$ and $A \subseteq M$, a type
$p (\overline{x})$ in $S_{\overline{x}} (M)$ is said to be $A$-definable if for
every
$\mathcal{L}$-formula
$\varphi (\overline{x}, \overline{y})$
there exists
an 
$\mathcal{L}_A$-formula
$d_p \varphi (\overline{y})$ such that
for every $\overline{b}$  in $M$,
$\varphi (\overline{x}, \overline{b}) \in p$ if and only if
$M \models d_p \varphi (\overline{b})$.
If $p \in S_{\overline{x}} (M)$ is definable via $d_p \varphi$, then the same scheme gives rise to a unique
type $p_{\vert N}$ for any elementary extension $N$ of $M$.
There is a general notion of stable domination for 
$A$-definable types: stably dominated types are in some sense ``controlled by their stable part''.
In the case of $\acvf$, there is concrete characterisation of  $A$-definable  stably dominated types as those which are
orthogonal to $\Gamma$, meaning that for every elementary extension $N$ of $M$, if $\overline{a} \models p_{\vert N}$, one has $\Gamma (N) = \Gamma (N \overline{a})$.

Let $X$ be an $A$-definable set in $\acvf$, with $A$ an ${\mathcal L}_{\mathcal{G}}$-structure. 
A basic result in ~\cite{HL} states that 
there exists a strict $A$-pro-definable set $\widehat{X}$ such that for any $C \supseteq A$, $\widehat{X} (C) $ is equal to the set of $C$-definable stably dominated types on $X$ (\cite[Theorem 3.1]{HL}).
Here by pro-definable we mean a pro-object in the category of definable sets and strict refers to the fact  that the transition morphisms can be chosen to be surjective.
Morphisms in the category of pro-definable sets are  called definable morphisms.

In fact $\widehat{X}$ can be endowed with a topology that makes it a pro-definable space in the sense of \cite[Section 3.3]{HL}.
In this setting there is a model theoretic version of compactness, namely definable compactness: 
a pro-definable space $X$ is said to be definably compact if every definable type on $X$ has a limit in $X$.
In an o-minimal structure $M$, this notion is equivalent to the usual one, namely a definable subset $X \subseteq M^n$ is definably compact if and only if it is closed and bounded.

\subsection{$\Gamma$-internal sets}Let us fix a valued field $k$ and a quasi-projective variety $X$ over $k$. We denote by $\Gamma$ the value group of $k$. The structure induced is that of an ordered abelian group in the language of ordered groups, in particular it is o-minimal. We extend $\Gamma$ to   $\Gamma_\infty = \Gamma \cup \{\infty\}$ with $\infty$ larger than any element of $\Gamma$.
A pro-definable set  is called iso-definable if it is pro-definably isomorphic to a definable set.
A $\Gamma$-internal subset $Z$ of $\widehat{X}$, or more generally of $\widehat{X \times \Gamma_\infty^m}$, is an iso-definable subset such that there exists a surjective definable morphism
$D \to Z$ (which can be assumed to be bijective by elimination of imaginaries)
with $D$ a definable subset of some $\Gamma_\infty^r$.

By ~\cite[Theorem 6.2.8]{HL}, 
if $Z$ is a $k$-iso-definable and $\Gamma$-internal subset of $\widehat{X}$,
there exists some finite $k$-definable set $w$ and a continuous injective definable morphism
$f : Z \hookrightarrow \Gamma_\infty^w$. In particular if $Z$ is definably compact such an $f$ is a homeomorphism onto its image.

\subsection{The Zariski-generic case}\label{gamma-internal-generic}
Assume that $k$ is algebraically closed.  We can then assume
$w = \{1, \ldots, n\}$. Then the definable injection $Z\hookrightarrow \Gamma_\infty^n$
alluded to above can be obtained by using (locally) valuations of regular
functions. Thus if $X$ is irreducible and $Z$ only consists of Zariski-dense points, we can find a dense open subset $U$
of $X$ and invertible functions
$g_1,\ldots, g_n$ on $U$ such that the functions $\val (g_i)$ induce a definable bijection between $Z$ and a $k$-definable subset of $\Gamma^n$
(without $\infty$). Moreover, by shrinking $U$ 
and adding some extra invertible functions to the $g_i$, we can assume that $g$ induces a closed immersion $U\hookrightarrow \gm^n$; then 
the functions $\val (g_i)$ induce a (definably) \textit{proper} map $\widehat U\to \Gamma^n$ and thus a definable
\textit{homeomorphism}
between $Z$ and its image.

\subsection{Retractions to skeleta}Since multiplication does not belong to the structure on the value group sort $\Gamma$, we have to consider generalized intervals, which are 
obtained by concatenating a finite number of (oriented) closed intervals in $\Gamma_\infty$.
Such a generalized interval $I$ has an origin $o_I$ and an end point $e_I$.

We may now define strong deformation retractions. Fix a valued field $k$ and a quasi-projective variety $X$ over $k$.
A strong deformation retraction of $\widehat{X}$ onto $\Upsilon \subseteq \widehat{X}$ is a continuous $k$-definable morphism
\[
H: I \times \widehat{X} \longrightarrow \widehat{X}
\]
such that
\begin{enumerate}
\item[$\bullet$] The restriction of $H$ to $\{o_I\} \times \widehat{X}$ is the identity on $\widehat{X}$.
\item[$\bullet$]  The restriction of $H$ to $I \times \Upsilon$ is the identity on $I \times \Upsilon$.
\item[$\bullet$] The image of the restriction $H_{e_I}$ of $H$ to $\{e_I\} \times \widehat{X}$ is contained in $\Upsilon$.
\item[$\bullet$]  For every $(t, a) \in I \times \widehat{X}$, $H_{e_I} (H(t, a)) = H_{e_I} (a)$.
\end{enumerate}

A special case of the main result of \cite{HL} states the following:

\begin{theo}\label{ret-theo}Let $X$ be a quasi-projective variety over a valued field $k$.
Then there is a ($k$-definable) strong deformation retraction
\[
H: I \times \widehat{X} \longrightarrow \widehat{X}
\]
onto a $\Gamma$-internal subset $\Upsilon \subseteq  \widehat{X}$ and a $k$-definable injection $\Upsilon
\to \Gamma_{\infty}^w$ for some finite definable set $w$, which is a homeomorphism onto its image and
such that for each irreducible component $W$ of $X$, $\Upsilon \cap \widehat{W}$ is of o-minimal dimension
$\dim (W)$ at each point.
\end{theo}

We shall call such a $\Gamma$-internal set $\Upsilon$ a \emph{retraction skeleton} of  $\widehat{X}$.
Note that this is what is called a skeleton in \cite{HL}, but we have decided to change the terminology to avoid conflict with the literature.

\begin{rema}
When $X$ is smooth and irreducible, 
there exists a deformation retraction as above with $\Upsilon$
consisting only of Zariski-generic points: this follows from the proof of Theorem 11.1.1 in \cite{HL}, see also Chapter 12 of \cite{HL};
so if $k$ is a model of $\acvf$ then $\Upsilon$ can be topologically and $k$-definably identified
with a subset of some $\Gamma^m$ by using valuations of non-zero rational functions
(\ref{gamma-internal-generic}).

Note that the smoothness assumption cannot be dropped for the above: if $X$ is a cubic nodal curve, any retraction skeleton $\Upsilon$ of 
$\widehat X$ contains the nodal point (and
any definable topological embedding
from $\Upsilon$ into some $\Gamma_\infty^w$
will send the nodal point to  a $w$-uple with at least one infinite coordinate). 
\end{rema}

\subsection{Strongly stably dominated types}In fact all retraction skeleta  of $\widehat{X}$ are contained in  the subspace $X^{\#} \subseteq \widehat{X}$ of strongly stably dominated types on $X$. 
The study of the space $X^{\#}$ is the subject of Chapter 8 of \cite{HL}. Loosely speaking
the notion of strongly stably dominated corresponds to a strong form of the Abhyankar property for valuations namely that the transcendence degrees of the extension and of the residue field extension coincide.
An important property of 
$X^{\#}$ is that it has a natural structure of
(strict) ind-definable subset of $\widehat{X}$. Furthermore, by ~\cite[Theorem 8.4.2]{HL},
$X^{\#}$ is exactly the union of all the retraction skeleta of $\widehat{X}$.

It seems plausible that arbitrary $\Gamma$-internal subsets of $\widehat X$ can be rather pathological, but those contained
in $X^{\#}$ should be reasonable. We shall see below that this is indeed the case at least
for $\Gamma$-internal subsets of $X^{\#}$ 
that consist of Zariski-generic points (when $X$ is irreducible). 
When $X$ is irreducible, we will denote by 
$X^{\#}_{\mathrm{gen}}$ the subset of  $X^{\#}$ consisting of Zariski-generic points.

\subsection{Connection with Berkovich spaces}

Let $k$ be a valued field with $\val (k) \subseteq \mathbb{R}_\infty$, which we assume to be complete.
Let $X$ be a separated and reduced algebraic variety of finite type over $k$.
Denote by $X^{\mathrm{an}}$ its analytification in the sense of Berkovich.
Chapter 14 of \cite{HL} is devoted to a detailed study of how one can deduce statements about
$X^{\mathrm{an}}$ from similar statements about $\widehat{X}$.
This comes from the fact that, if one denotes by $k^{\mathrm{max}}$ a 
maximally complete algebraically closed extension of $k$ with value group $\mathbb{R}$ and residue field the algebraic closure of the residue field of $k$,
there is a canonical and functorial
map
$\pi : \widehat{X}(k^{\mathrm{max}}) \to X^{\mathrm{an}}$ which is continuous, surjective, and closed. When $k = k^{\mathrm{max}}$, $\pi$ is actually a homeomorphism.
Furthermore, any  $k$-definable 
morphism $g : \widehat{X} \to \Gamma_\infty$ induces a unique map
$\tilde g : X^{\mathrm{an}} \to \mathbb{R}_\infty$ which is continuous if $g$ is, and
any ($k$-definable) strong deformation retraction
$H: I \times \widehat{X} \rightarrow \widehat{X}$
induced canonically a 
strong deformation retraction
$\tilde H: I (\mathbb{R}_\infty) \times X^{\mathrm{an}} \rightarrow X^{\mathrm{an}}$
compatible with $\pi$ for any $t \in I (\mathbb{R}_\infty)$.
Thus, if one defines a retraction skeleton $\Sigma$ in 
$X^{\mathrm{an}}$ as  the image under $\pi $ of the $k^{\mathrm{max}}$-points of a retraction skeleton in 
$\widehat{X}$, we obtain that when $X$ is quasi-projective there exists a
strong deformation retraction of
$X^{\mathrm{an}}$ onto a retraction skeleton $\Sigma$. Furthermore, 
the fact that retraction skeleta in $\widehat{X}$ are contained in $X^{\#}$
implies that 
any  point of $\Sigma$, as a type over $(k, \mathbb{R})$, extends to a unique stably dominated type; this type is strongly stably dominated and, restricted to $(k, \mathbb{R})$, it determines an Abhyankar extension of the valued field $k$, cf. Theorem 14.2.1 in \cite{HL}.

\section{Finite generation and Lipschitz functions in $\DOAG$}\label{section4}
In this section, we work in the theory of divisible ordered abelian groups which is denoted by $\DOAG$, and by definable we mean definable with parameters. We shall usually denote by
$\Gamma$ a model of $\DOAG$. We start with the definition of $w$-combination and $w$-generation.
\begin{defi}\label{def:t-gen}Let $X$ and $Y$ be definable topological spaces and $g$, $f_1,\ldots,f_n$ be definable continuous functions from $X$ to $Y$. We say $g$ is a \emph{$w$-combination} of $f_1,\ldots,f_n$ if for every $x\in X$, there is some $i\in \{1, \ldots, n\}$ such that $f_i(x)=g(x)$. Notationally, we use $[g=f_i]$ to denote the set $\{x\in X:g(x)=f_i(x)\}$. Hence, $g$ is a $w$-combination by $f_1,\ldots,f_n$ iff $X=\bigcup^n_{i=1}[g=f_i]$.
\end{defi}

In contrast, there is a stronger notion of combination that is very specific to $\DOAG$. 
\begin{defi}\label{def:l-gen}Let $X$ be a definable topological space and let $g$ and $f_i$, $i\in I$, be definable continuous functions $X\to \Gamma$. We say that $g$ is an \emph{$\ell$-combination} of the $f_i$ if $g$ lies in the $(\min,\max)$-lattice generated by $(f_i)_{i\in I}$. More explicitly, there are $f_1,\ldots,f_n$ in $(f_i)_{i\in I}$ such that $g$ is a function obtained by $f_1,\ldots,f_n$ and finitely many operations of $\min,\max$.
\end{defi}

We shall also use the following variants of  $w$ and $\ell$-combination.
\begin{defi}\label{def:l-genvar}Let $X$ be a definable topological space and let $g$ and $f_i$
be definable continuous functions $X\to \Gamma$ for $i\in I$. 
We say that $g$ is a \emph{$(w, +)$-combination} of  the $f_i$ if there exist $h_1, \ldots, h_n$ in the abelian group generated by the functions $f_i$, $i \in I$ such that $g$ is a $w$-combination of the $h_i$. 
We say that $g$ is an \emph{$(\ell, +)$-combination} of the $f_i$ if $g$ can be described by a formula involving only $+,-,\min$ and $\max$ and finitely many $f_i$.
\end{defi}

We say that a given set of functions containing the $f_i$ and stable under $w$-combination is $w$-generated by the $f_i$ if it consists precisely of the set of all $w$-combinations of the $f_i$. We define $(w,+)$, $\ell$ and $(\ell,+)$-generation in an analogous way. 

\begin{exem}\label{eg:min-k}
Let $X=\Gamma^n$ and $m_k:X\to \Gamma$ be the definable function which to $(x_1,\ldots,x_n)$ assigns the $k$-th smallest $x_i$. Clearly, $m_k$ is a $w$-combination of the coordinate functions $x_1,\ldots,x_n$. On the other hand, it is not hard to see that 
\[
m_k(x)=\min_{U\subseteq \{1,\ldots,n\},|U|=k} \max_{i\in U} x_i
\]
Hence the $m_k(x)$ are even $\ell$-combinations of $x_1,\ldots,x_n$.
\end{exem}
However, the two notions of combinations do not agree in general.
\begin{exem}\label{eg:dl}
Let $I$ be the interval $[0,\infty)\subseteq \mathbb Q$.
Let $D= I\times \{1,2\}\subseteq\mathbb Q^2$ and $f_1=0$,
$f_2=x_1$. Consider $g$ that is equal to
$f_i$ on $I\times \{i\}$ for $i=1,2$. Clearly
$g$ is a $w$-combination of $f_1$ and $f_2$.
However, we claim that $g$ is not an $(\ell, +)$-combination
of coordinate functions.
Indeed, if it were, then it would extend to a continuous
$\mathbb Q$-definable function $g'$ on $\mathbb Q^2$. Let $\Gamma$ be a model of $\DOAG$
containing $\mathbb Q$ and in which there is some $c>n$ for all $n\in\mathbb{N}$. Since $\tp(1,c)=\tp(\alpha,c)$ for any $1>\alpha>0$ and $g(1,c)=c$, so $g'(\alpha,c)=c$. However $g'(0,c)=g(0,c)=0$, in contradiction with the continuity of $g'$. For a connected version of this example, replace $D$ by $D'=D\bigcup \{0\}\times [1,2]$ and set $g=0$ on $\{0\}\times [1,2]$.
\end{exem}
This example suggests that interaction of the ambient space and the topology of $D$ plays a role in distinguishing the two notions of combinations. To proceed towards a topological characterisation for such properties, we need the following.
\begin{defi}\label{def:convex} Let $T$ be an o-minimal expansion of $\DOAG$ and $\Gamma\models T$ with $D\subseteq \Gamma^n$ definable. We say that $D$ is convex if for any $u$
and $v$ in $D$, $\frac{u+v}{2}\in D$.
\end{defi}
\begin{rema}\label{rmk:rcf}
When $T$ is an o-minimal expansion of the theory of real closed fields $\mathrm{RCF}$, this is equivalent to the usual definition of convexity for definable sets. For $u,v\in D$, let $L\subseteq [0,1]$ be $\{\alpha: \alpha u+(1-\alpha)v \in D\}$. By our notion of convexity, $L$ contains $\mathbb{Z}[1/2]\cap [0,1]$. By o-minimality, $L$ must be $[0,1]$ with at most finitely many points in $(0,1)$ removed. But  removing any point from $(0,1)$ would lead to  a violation of convexity.

Note further that for $D$ convex, working inside the smallest affine subspace containing $D$, we may assume that $\cl(\mathrm{int}(D))=\cl(D)$.
\end{rema}

Lastly, recall that for any definable subset $D$ of some $\Gamma^n$, a function $f:D\to \Gamma$ is called \emph{$\mathbb{Q}$-affine} if $f=\sum^n_{i=1} m_ix_i+c$ where $m_i\in \mathbb{Q}$ and $c\in \Gamma$. Such functions are the most basic definable continuous functions on $D$. We say $f$ is \emph{$\mathbb{Z}$-affine} if the $m_i$ are all in $\mathbb{Z}$.
\begin{prop}\label{prop:t-l-equiv}
Let $\Gamma$ be a divisible ordered abelian group and let $f_1,\ldots,f_m$ be $\mathbb{Q}$-affine functions on $\Gamma^n$. Let $D\subseteq \Gamma^n$ be definable and $g:D\to \Gamma$ be a continuous definable function. Assume that $g$ is a $w$-combination of $f_1,\ldots,f_m$. Then the following are equivalent:
\begin{enumerate}
    \item $g$ is an $\ell$-combination of $f_1,\ldots,f_m$.
    \item $g$ extends to a continuous definable function $g':\Gamma^n\to \Gamma$ that is a $w$-combination of $f_1,\ldots,f_m$.
    \item $g$ extends to a continuous definable function $g':D'\to \Gamma$  on some convex definable set $D'$ containing $D$ that is a $w$-combination by $f_1,\ldots,f_m$.
    \item For any $x,y\in D$, there is $i \in \{1,\ldots,m\}$ such that $f_i(x)\leq g(x)$ and $g(y)\leq f_i(y)$.
    \item For some collection $S$ of subsets of $\{1,\ldots,m\}$, $g=\min_{X\in S} \max_{i \in X} f_i$.
\end{enumerate}
\end{prop}

\begin{proof}
The implications $(5)\implies (1)\implies (2)\implies (3)$ are clear.

For $(3)\implies (4)$, by working in an elementary extension, we may assume that $\Gamma$ is a model of the theory of real closed fields $\mathrm{RCF}$. By Remark~\ref{rmk:rcf} and after replacing $D$ by the convex set $D'$ in (3), we may assume the line segment $[x,y]$ connecting $x,y$ is in $D$. Replace $g$ by $g'$ given by (3) as well.  Let $I_j\subseteq [x,y]$ be $\{z: g(z)=f_j(z)\}$. By continuity of $g$ and o-minimality, we know that the sets $I_j$ are finite unions of closed intervals and $\bigcup^m_{j=1} I_j=[x,y]$. Consider the canonical parameterization $h:[0,1]\to [x,y], \alpha\mapsto \alpha y+(1-\alpha)x$, and let $f_i'=f_i\circ h$, $g'=g\circ h$ and $I'_j=h^{-1}(I_j)$. Since the functions $f_i$ are $\mathbb{Q}$-affine, the functions $f'_i$ are of the form $a_ix+b_i$ for some $a_i,b_i\in \Gamma$. Let $k$ be the $j$ such that $a_j$ is the greatest amongst all the $j$ such that $I'_j\neq \emptyset$. If there are multiple such $j$, pick any. By induction, for $a$ to the right of $I'_k$, we have $g'(a)\leq f'_k(a)$. Similarly, for $a$ to the left of $I'_k$, we have $f'_k(a)\leq g'(a)$. In particular we have $f'_k(0)=f_k(x)\leq g'(0)=g(x)$ and $g'(1)=g(y)\leq f_k(y)=f'_k(1)$.

For $(4)\implies (5)$, consider $S$ to be the collection of subsets $X\subseteq \{1,\ldots,m\}$ such that $g\leq \max_{i\in X} f_i$ on the entire $D$. 
Set $f:=\min_{X\in S}\max_{i\in X} f_i$.
We claim that $g= f$. Clearly $g\leq f$, so it suffices to show that $g\geq f$. For each $W\notin S$, there is some $y_W$ such that $g(y_W)>f_i(y_W)$ for
every $i \in W$. By $(4)$, for each $x\in D$, there is $i^x_{W}$ such that $f_{i^x_{W}}(x)\leq g(x)$ and $f_{i^x_W}(y_W)\geq g(y_W)$. Note that $i^x_W\notin W$. Let $X=\{i^x_W:W\notin S\}$.  We have that $X\in S$ because otherwise, $i^x_X\in X$. For this $x$, we have that $\max_{i\in X} f_i(x)\geq g(x)$ and $f_i(x)\leq g(x)$ for any $i\in X$, hence $f(x)\leq \max_{i\in X} f_i(x)=g(x)$.
\end{proof}

\begin{coro}\label{cor:cordinate}
Let $D\subseteq \Gamma^n$ be a definable convex set. The set of definable continuous functions from $D$
to $\Gamma$  is $(\ell,+)$-generated by the constants and all rational multiples of coordinate functions. 
\end{coro}
\begin{proof}
By quantifier elimination, we can find $\mathbb{Q}$-affine functions $f_1,\ldots,f_n$ such that $g$ is a $w$-combination of
 $f_1,\ldots,f_n$. By Proposition~\ref{prop:t-l-equiv}, we have that $g$ is in fact an $\ell$-combination of $f_1,\ldots,f_n$.
\end{proof}
Proposition~\ref{prop:t-l-equiv} suggests that the agreement of $w$-combination and $\ell$-combination is related to the existence of continuous extensions to an ambient convex space. This motivates the following definition.
\begin{defi}\label{def:Lipschitz}
For a tuple $x\in \Gamma^n$, define $|x|=\max^n_{i=1}|x_i|$. Let $D\subseteq\Gamma^n$ and $f:D\to \Gamma$ a definable function. We say $f$ is \emph{Lipschitz} if there is some $M\in \mathbb{N}$ such that $|f(x)-f(y)|\leq M|x-y|$. \end{defi}

Note that Lipschitz functions are automatically continuous and clearly the class of Lipschitz functions depends on the embedding of $D$ in $\Gamma^n$.
Our purpose is now to investigate
Lipschitz definable functions on closed definable sets; a first step
will consist in reducing to the definably compact case, 
by using the two following lemmas. 

\begin{lemm}\label{lem:lipschitz-closure}
Let $\Gamma$ be a model of $\DOAG$, let $D$
be a subset of $\Gamma^n$ definable over some set
$A$ of parameters, and let
$f\colon D\to \Gamma$ be a Lipschitz 
$A$-definable map. Let $(f_i)$ be a finite family
of $\mathbb{Q}$-affine $A$-definable functions 
such that $f$ is a $w$-combination
of the $f_i|_D$. Then $f$ admits a unique
continuous extension $\overline f$ to $\cl (D)$, 
the set $\cl (D)$ and the function $\overline f$
are $A$-definable, and $\overline f$ is Lipschitz
and if a $w$-combination of
the $f_i|_{\cl (D)}$. 
\end{lemm}

\begin{proof}
The uniqueness of $\overline f$ is clear, as well
as the $A$-definability of $\cl (D)$ and $\overline f$
if the latter exists, as one sees by
using the definition of the closure and of the limit
(with $\epsilon$ and $\delta$\ldots). The same reasoning
also shows that the set of points of $\cl (D) \setminus D$ at which $f$
admits a limit is $A$-definable. 
Moreover if $\overline f$ exists it inherits obviously the Lipschitz
property of $f$, and it is also $w$-generated by the (restrictions of)
the $f_i$: indeed, the subset of $\cl (D)$ consisting of points $x$
such that there is some $i$ with $f(x)=f_i(x)$ is closed and contains $D$, thus is the whole of $\cl (D)$. 

It thus remains to show the existence of $\overline f$,
and this can be done after enlarging the model $\Gamma$.
We can thus suppose that it is equal to the additive group of some
real closed field. 
Let $x$ be a point of $\cl (D) \setminus D$.
There exists a half-line $L$ emanating from
$x$ such that $(x,y)\subset D$ for some $y$;
taking $y$ close enough to $x$ we can assume that $f=f_j$ on
$(x,y)$ for some $j$. Then the limit of $f$
at $x$ along the direction of $L$ exists and is equal to $f_j(x)$. 
The Lipschitz property then ensures
that this limit does not depend on $L$,
let us
denote it by $\overline f(x)$. Since $D$
is defined by affine inequalities, there is a positive
$\gamma\in \Gamma$ such that for every $y$
in $\Gamma^n$ with $\|x-y\|<\gamma$
(say for the Euclidean norm) then either $(x,y)\subset D$
or $(x,y)\cap D=\emptyset$. Thus if $y$ is a point of $D$
with $\|x-y\|<\gamma$ then
$\abs{f(y)-\overline f(x)}\leq N\|x-y\|$
where $N$ is an upper bound for the slopes of the $f_i$. 
So $f(y)$ tends to $\overline f(x)$ when the point $y$
of $D$ tends to $x$.
\end{proof}

\begin{lemm} \label{lem:comp-l-gen}
Let $M$ be either $\{0\}$ or a model of $\DOAG$, let
$\Gamma$ be a model of $\DOAG$
containing $M$, and let $\rho$
be an element of $\Gamma$ 
with $\rho >M$. 
Let $Z\subseteq \Gamma^n$ be
an $M$-definable subset. Let $x_1,\ldots,x_n:Z \to \Gamma$ denote the coordinate functions of $Z$ and let $h:Z\to \Gamma$ be an
$M$-definable function. 

Assume that there 
exists a term $t$
in
$\{+,-,\max,\min\}$ and
$\gamma=(\gamma_1,\ldots,\gamma_l)$
in $\Gamma^\ell$
such that $h|_{Z_\rho}=t(x_1,\ldots,x_n,\gamma)|_{Z_\rho}$,
where $Z_\rho=Z\cap [-\rho,\rho]^n$. 

Then there is a term $t'$ in  $\{+,-,\max,\min\}$ and
a finite tuple $\beta$ of elements of $M$ such that $h=t'(x_1,\ldots,x_n,\beta)$.
\end{lemm}

\begin{proof}
Assume first that $M$ is 
a model of $\DOAG$. By our assumption,
there exists a term $t$ in $\{+,-,\max,\min\}$ and
a tuple
$\gamma=(\gamma_1,\ldots,\gamma_l)\in \Gamma^\ell$
such that $h|_{Z_\rho}=t(x_1,\ldots,x_n,\gamma)|_{Z_\rho}$. 
By model-completeness of $\DOAG$, the $\gamma_i$ can be chosen in
$M\oplus \mathbb Q\cdot \rho$.  
Thus there is $m>0$ such that for each $i$, there exist integers $k_i$ and $\beta_i\in M$
with $\gamma_i=\frac{k_i}{m}\rho+\beta_i$. Let $\nu$ denote $\rho/m$.
We have 
\[
h|_{Z_{m\nu}}=t(x_1,\ldots,x_n,(k_i\nu+\beta_i))|_{Z_{m\nu}}.
\]
Viewing the above as a first-order formula with constants in
the model $M$ and a variable for $\nu$, using o-minimality
and model-completeness of $M$, we have some $\nu_0\in M_{>0}$ such that for any $\nu'>\nu_0$, the following holds in $M$:
\[
h|_{Z_{m\nu'}}=t(x_1,\ldots,x_n,(k_i\nu'+\beta_i))|_{Z_{m\nu'}}.
\]
Take $\nu(x)=\max \{|x_1|,\ldots,|x_n|,2\nu_0\}$ and
\[
t'(x_1,\ldots,x_n,\beta)=t(x_1,\ldots,x_n,(k_i\nu(x)+\beta_i)).
\]
We then have 
\[
h=t'(x_1,\ldots,x_n,\beta)
\]
by construction, which ends the proof when $M\neq \{0\}$.

If $M=\{0\}$, 
set $\Gamma'=\Gamma\oplus \mathbb{Q}\cdot \delta$ where $\delta$ is positive
and infinitesimal with respect to $\Gamma$, set $M'=\mathbb{Q}\cdot \delta$ and
let us denote
by $Z'$ and $h'$ the objects
deduced from $Z$ and $h$ by base-change to $\Gamma'$. 
Applying the above yields
a
term 
$\theta$ in $\{+,-,\max,\min\}$ 
and a tuple $\beta$ 
of elements of $\mathbb{Q}\cdot \delta$ such that $h'=\theta(x_1,\ldots,x_n,\beta)$. 
By reducing modulo the convex subgroup $\mathbb{Q}\cdot \delta$ of $\Gamma'$
we see that $h=\theta(x_1,\ldots,x_n,0)$. 
\end{proof}

We can now state the main result of this section.

\begin{theo}\label{thm:lip-t-gen}
Let $M\models \DOAG$ or $M=\{0\}$ and
let $\Gamma$ be a model of
$\DOAG$ containing $M$.
Let $D\subseteq \Gamma^m$ be an $M$-definable set. Let $g:D\to \Gamma$ be a Lipschitz definable function over $M$. Let $f_1,\ldots,f_n$ be $\mathbb{Q}$-affine functions over $M$ such that $g$ is a $w$-combination of $f_1,\ldots,f_n$. Then $g$ is an $(\ell, +)$-combination of the $f_i$,
the constant $M$-valued functions and the coordinate functions. 
\end{theo}

Before proving this result we will need some preliminaries on cell decomposition in $\DOAG$. 

\subsection{Cell decomposition}
Fix a model $\Gamma$ of
$\DOAG$.
We shall use the notion
of special linear decompositions from ~\cite{elef}.  In ~\cite{elef}, Eleftheriou defines the notion of linear decomposition, which is a cell decomposition using only
graphs of $\mathbb{Q}$-affine functions instead of general piecewise $\mathbb{Q}$-affine functions. In fact we will need only to consider bounded linear cells in $\Gamma^n$. They are defined by induction on $n$.
In $\Gamma^0$ the origin is a bounded linear cell. If $C$ is a bounded linear cell in $\Gamma^{n-1}$, $f$ and $g$ are $\mathbb{Q}$-affine functions on $\Gamma^{n-1}$, with $f < g$ on $C$, the relative interval 
$(f < g)_C = \{(x', y) \in C \times \Gamma ; f (x') < y < g(x')\}$ and the graph $\Gamma (f)_C = \{(x', y) \in C \times \Gamma ; f (x') = y\}$ are bounded linear cells in $\Gamma^n$.
If $Y$ is a bounded definable set in $\Gamma^n$, a linear decomposition of $Y$ is a partition of $Y$ into (finitely many) bounded linear cells.

We denote by $\pi : \Gamma^n \to \Gamma^{n-1}$ the projection to the $n-1$ first coordinates.
A special linear decomposition of a bounded definable set $Y \subseteq \Gamma^n$ is defined recursively in ~\cite{elef} as follows. When $n = 1$ any cell decomposition of $Y$ is special.
If  $n > 1$, a linear decomposition $\mathcal{C}$ of $Y$ is special if the following conditions are satisfied:
 \begin{enumerate}[1]
\item $\pi (\mathcal{C})$ is a special linear decomposition of  $\pi (Y)$.
\item For every pair of  cells $\Gamma (f)_S$ and $\Gamma (g)_T$ in $C$ with $S$  in the closure of $T$,
$f|_S <g|_S$ or  $f|_S >g|_S$ or $f|_S =g|_S$.
\item\label{item3}  For every pair of cells $(f<g)_T$ and $X$ in $C$, where $X = \Gamma (h)_S$, $(h,k)_S$ or $(k,h)_S$,
there is no $c \in  \cl(S) \cap \cl(T)$ such that $f(c) < h(c) < g(c)$.
 \end{enumerate}
An important property of special linear decompositions is that if $D$ and $E$ are two cells in such a decomposition such that
$D \cap \cl (E)$ is non-empty then $D \subseteq \cl (E)$ (\cite{elef}, Fact 2.3).
By~\cite[Fact 2.2]{elef}  special linear decompositions  of $Y$ always exist.

Note that closures of cells have a simple description: 
the closure of $(f < g)_C$ is equal to $(f \leq  g)_{\cl (C)} =  \{(x', y) \in \cl (C) \times \Gamma ; f (x') \leq y \leq g(x')\}$ and the closure of 
$\Gamma (f)_C$, is  $\Gamma (f)_{\cl (C)}$.
In particular, if $C$ is a cell, $\pi (\cl (C)) = \cl (\pi (C))$.

\begin{lemm}\label{lemm:cell}Fix a special linear cell decomposition of a closed bounded definable subset of $\Gamma^n$ and let $C_1$ and $C_2$ be two cells. Set $D_1 = \cl (C_1)$ and
$D_2 = \cl (C_2)$. Assume that $D_1 \cap D_2$ is non-empty. Then there exists a cell $C$ such that
$D_1 \cap D_2 = \cl (C)$.
\end{lemm}

\begin{proof}We proceed by induction on $n$. 
The case $n=0$ is clear. If $n >0$, we have
that $\pi (D_1) \cap \pi (D_2) = \cl (C')$ for some cell $C'$ of the projection of the decomposition.
Since for $i = 1, 2$, $D_i\cap \pi^{-1} (C')$ is either of the form
$(f_i \leq  g_i)_{C'}$ or $\Gamma (f_i)_{C'}$, it follows from condition (3) of being a special linear decomposition  that either
$D_1 \cap D_2 \cap \pi^{-1} (C')
= (f_1\leq  g_1)_{C'}
= (f_ 2\leq  g_2)_{C'}$ or
$D_1 \cap D_2 \cap \pi^{-1} (C')
= \Gamma (f_1)_{C'} = \Gamma (f_2)_{C'}$, from which the statement follows.
\end{proof}

We shall also need the following statement.

\begin{lemm}\label{lemm:ineg}Let $D$ be a closed bounded definable subset of 
$\Gamma^n$. Assume $D$ is convex. Let $h$ be a $\mathbb{Q}$-affine function on $\Gamma^n$ such $h \geq 0$ on $D$.
Let $D_0$ be the zero locus of $h$ in $D$. We assume that $D_0$ is non-empty and $D_0 \neq D$.
Let $f$ be a $\mathbb{Q}$-affine function on $\Gamma^n$ which vanishes on $D_0$. Then there exists a positive integer
$M$ such that, for every $x \in D$, $\vert f (x) \vert \leq M h (x)$.
\end{lemm}

\begin{proof}Let $\mathcal{D}$ be a linear decomposition of $D$.
We consider the set $\mathcal{F}$ of all sets $F$ of the form $F = \cl (C)$, with $C$ a 1-dimensional cell in $\mathcal{D}$, 
that intersect the hyperplane $h =  0$ and are not contained in $h=0$.
For such an $F$ we denote by $p_F$ its intersection point with $h =  0$.
There exists a positive integer $M_F$ such that 
$\vert f (x) \vert \leq M_F h (x)$ on $F$. Indeed,
the restrictions of both $h$ and $g$ to the line segment $F$ are linear functions on $F$ vanishing at the endpoint $p_F$ of $F$ and the restriction of $h$ is not identically zero, which yields the existence of 
some $M_F$.
In fact the inequality $\vert f (x) \vert \leq M_F h (x)$ holds on the whole half-line $L_F$ containing $F$ with origin $p_F$.
Take $M = \max_{\mathcal{F}} (M_F)$. Now consider  $R$ a $\mathrm{RCF}$-expansion of $\Gamma$. Let $Y$ be the convex hull of the half-lines $L_F$ in that expansion. We have $\vert f (x) \vert \leq M h (x)$ on $Y$.
But $Y$ contains $D  (R)$,  since if $P$ is a convex definably compact polyhedron of $R^n$, and if $F$ is a face of $P$ of any dimension, then the convex hull of all half-lines directed by $1$-faces intersecting $F$ contains $P$,
hence the result, taking $P = D$ and $F = D_0$.
\end{proof}

The following statement about separation by hyperplanes will play a key role in our proof of Theorem \ref{thm:lip-t-gen}.

\begin{prop}\label{prop:cell}Fix a special linear cell decomposition of a closed bounded definable subset of $\Gamma^n$ and let $C_1$ and $C_2$ be two cells. 
Assume $C_1 \neq C_2$. Set $D_1 = \cl (C_1)$ and
$D_2 = \cl (C_2)$. Then there exists a $\mathbb{Z}$-affine function $h$ such that $h \geq 0$ on $D_1$, 
$h  \leq 0$ on $D_2$, and
the hyperplane $H =h^{-1}(0)$  satisfies $D_1\cap D_2 =D_1 \cap H=D_2\cap H$.
\end{prop}

\begin{proof}We shall proceed by induction on $n$, the case $n=1$ being clear.
If $\pi (C_1) = \pi (C_2)$, then the statement is clear. Indeed, for each $i = 1, 2$, we have
$C_i = (f_i < g_i)_S$  or $C_i = \Gamma(f_i)_S$. In the second case we set $g_i = f_i$.
We may assume that $C_1$ is above $C_2$. The graph of the average of $f_1$ and $g_2$ provides the required hyperplane.

Thus we will assume from now on that $\pi (C_1) \neq \pi (C_2)$. 
We set $C'_i = \pi (C_i)$ for $i = 1, 2$.
By Lemma \ref{lemm:cell}, if  $D_1 \cap D_2$ is non-empty, there exists a cell $C$ such that
$D_1 \cap D_2 = \cl (C)$.\\

\textbf{Case 1:} $D_1 \cap D_2$ is non-empty and $C$ is of the form $(f < g)_S$.\\

In this case, for $i = 1, 2$,  $C_i$ is necessarily  of the form
 $(f_i < g_i)_{C'_i}$ where  $f_i$ and $g_i$ are $\mathbb{Q}$-affine functions coinciding with $f$ and $g$ on $S$, since we are working with a special linear cell decomposition.
Furthermore  $D_i =  (f_i \leq g_i)_{\cl (C'_i)}$ and we have $f_1 = f_2$ and $g_1 = g_2$ on $\cl (C'_1) \cap \cl (C'_2)$.
It follows that $D_1 \cap D_2 = 
(f_i \leq g_i)_{\cl (C'_1) \cap \cl (C'_2)}$, for $i = 1,2$.
By the induction hypothesis, there exists an hyperplane $h'$ in $\Gamma^{n-1}$ given by a 
$\mathbb{Z}$-affine equation satisfying
the conditions of Proposition \ref {prop:cell} relatively to 
$\cl (C'_1)$ and $\cl (C'_2)$. Consider the vertical hyperplane $H$ above $h'$ (the hyperplane defined by the same equation in $\Gamma^n$).
It follows from our description of $D_1 \cap D_2$ that $H$ satisfies the required conditions.\\

\textbf{Case 2:} $D_1 \cap D_2$ is non-empty and  $C$ is of the form $\Gamma (f)_S$. \\

By the induction hypothesis, 
there exists an hyperplane $h'$  given by an equation $h' (x')=0$ in $\Gamma^{n-1}$, with $h'$ a $\mathbb{Z}$-affine function, $x'=(x_1,\ldots, x_{n-1})$
fulfilling
the conditions of Proposition \ref {prop:cell} relatively to 
$\cl (C'_1)$ and $\cl (C'_2)$. 
In particular $h' \geq 0 $ on 
$\pi (D_1)$ and $h' \leq 0$ on  $\pi (D_2)$.
We denote by $H$ the hyperplane with equation $h'(x')=0$ in $\Gamma^{n}$.

The set $C_1$ is of the form $(f_1 < g_1)_{C'_1}$ or $\Gamma (f_1)_{C'_1}$. In the second case we set $g_1 = f_1$.
Similarly $C_2$ is of the form $(f_2 <g_2)_{C'_2}$ or $\Gamma (f_2)_{C'_2}$ and in the second case we set $g_2 = f_2$.
Set $C' = \pi (C)$.
Since our linear decomposition  is special, 
we have that $f|_{C'}$ is equal to $f_1|_{C'}$ or $g_1|_{C'}$. Without loss of generality we may assume that $f|_{C'} = g_1|_{C'}$. It follows that  $f|_{C'}=f_2|_{C'}$ by the case assumption and the fact our decomposition is special. 
Let $X$ be the graph of $g_1$ over $\cl (C'_1)$. The function $x_n - f (x')$ is identically zero on
$H \cap X$, hence by Lemma \ref{lemm:ineg}, there exists a positive integer $M$ such that
$\vert x_n - f (x') \vert \leq M h' (x')$ on $X$. After increasing $M$ we may assume the inequality is strict when $h' (x') \neq 0$.
It follows that the 
the hyperplane $H_M$ with equation $Mh'(x’) - (x_n - f (x’)) = 0$ lies  above the set $D_1$
and strictly above $D_1 \setminus H$. Using the same argument for $D_2$, we get that after possibly increasing $M$
the hyperplane $H_M$  lies  under the set $D_2$ and strictly under $D_2 \setminus H$.
Let us check that $H_M$ satisfies the required conditions.
Indeed, a point $x = (x', x_n)$ lies in $D_1 \cap H_M$ if and only if
$x' \in \pi (D_1)$, $x \in H_M$, and  $f_1 (x') \leq x_n \leq g_1 (x')$.
But if $x \in D_1 \cap H_M$ we must have $h'(x') = 0$.
Thus 
$x$ lies in $D_1 \cap H_N$ if and only if
$x' \in \pi (D_1)$, $x \in H_M$, $h'(x')= 0$ and  $x_n = f(x')$, from which the equality
$D_1 \cap H_N = D_1 \cap D_2$ follows, and one gets similarly that 
$D_2 \cap H_N = D_1 \cap D_2$.\\

\textbf{Case 3:} $D_1 \cap D_2$ is empty. \\

If $\pi (D_1) \cap \pi (D_2) = \emptyset$ then by the induction hypothesis there exists an hyperplane 
$h'$ in $\Gamma^{n -1}$ satisfying the required conditions for $\pi (D_1)$ and $\pi (D_2)$ and $\pi^{-1} (h')$ will do the job.
Thus we may assume that $\pi (D_1) \neq \pi (D_2)$ and $\pi (D_1) \cap \pi (D_2) \neq \emptyset$.
 We choose an 
hyperplane 
$h'$  in $\Gamma^{n -1}$ with equation $h'(x')=0$ satisfying the required conditions  for $\pi (D_1)$ and $\pi (D_2)$.
We may assume $h' \geq 0$ on $D_1$ and $h' \leq 0$ on $D_2$.
As in Case 2,  $C_1$ is of the form $(f_1 <g_1)_{C'_1}$ or $\Gamma (f_1)_{C'_1}$. In the second case we set $g_1 = f_1$.
Similarly for $C_2$.

Set  $D'_1= D_1\cap H$ and $D'_2 = D_2 \cap H$. 
We have $D'_1 \cap D'_2 = \emptyset$. 
We may assume that $f_2 > g_1$ over $\pi (D_1) \cap \pi (D_2)$.
Note  that if we intersect the cells  of a special linear cell decomposition of some bounded set $W$ with $H$ we get a special linear cell decomposition of $W \cap H$.
Thus we can apply the induction hypothesis to $D'_1$ and $D'_2$, and there exists a  $\mathbb{Z}$-affine function $f$ on $\Gamma^n$
such that $f >0$ on $D'_1$ and $f <0$ on $D'_2$. 
We claim that for $M$ a large enough integer the hyperplane
$M h' +f  = 0$ will separate $D_1$ and $D_2$.

To prove this we proceed similarly as in the proof of Lemma \ref{lemm:ineg}.
We consider the set $\mathcal{F}$ of all sets $F$ of the form $F = \cl (C)$, with $C$ a 1-dimensional cell contained in $D_1$, 
that intersect  $H$ and are not contained in $H$.
For such an $F$ denote by
$p_F$ the intersection point of $F$ with $H$.
The restriction of $f$ to
$F$ can be written as $f (p_F) + \ell_F$ with $\ell_F$ a $\mathbb{Q}$-linear function on $F$.
Since $h'$ is strictly positive on $F$ outside $p_F$,
there exists a positive integer $M_F$ such that $M_F h' + \ell_F \geq 0 $ on $F$.
This still holds on the whole half-line $L_F$ containing $F$ with origin $p_F$. 
Since $f (p_F) > 0$ by assumption, we get that $M_F h'  + f  > 0$ on $L_F$.
Take $M_1 = \max_{\mathcal{F}} (M_F)$. 
Proceeding as  in the proof of Lemma \ref{lemm:ineg} we deduce that
$M_1 h'  + f >0$ on $D_1$.
One proves similarly the existence of $M_2$ such that
$M_2 h' + f <0$ on $D_2$. Thus we can take $M = \max (M_1, M_2)$.
\end{proof}

\begin{proof}[Proof of Theorem \ref{thm:lip-t-gen}]
By Lemma \ref{lem:lipschitz-closure}
we can assume that $D$ is closed. 
We may then enlarge the model $\Gamma$
and assume that it contains some 
$\rho$ with $\rho>M$. Let
$D_\rho$ denote the
intersection of $D$ with $[-\rho,\rho]^m$. 
This is a definably compact subset of $\Gamma^m$
which is definable over $M_\rho:=M\oplus \mathbb Q\cdot \rho$. 
If we prove that $g|_{D_\rho}$
is an $(\ell,+)$-combination of the $f_i$, the coordinate
functions and some constant functions with values in $M_\rho$,
Lemma \ref{lem:comp-l-gen}
above will allow us to conclude that $g$
is an $(\ell,+)$-combination of the $f_i$, the coordinate
functions and some constant functions with values in $M$.
We thus may and do assume that $D$ is definably compact.
By considering a
submodel of $M$ over which everything is defined, we reduce to
the case 
where
$M$ has exactly $r$ non-trivial convex subgroups,
and we proceed by induction
on $r$. The case $r=0$
is obvious since the definably compact set $D$ is then either empty 
or equal to
$\{0\}$. Assume now
that $r>0$ and that the result holds true
for smaller values of $r$. 

Let $M_0$ be the smallest non-trivial convex subgroup of $M$, and $\overline{M}=M/M_0$ be the quotient.
We are first going to explain why we can assume that $g(D(M))
\subseteq M_0$; this is tautological if $M_0=M$, so we assume 
(just for this reduction step) that $M_0\neq M$. 
In this case $\overline M$
is a model of $\DOAG$ with  $r-1$ non-trivial convex subgroups, and the natural map carrying $M$ to $\overline{M}$ induces a map that carries $D(M)$ to a definably compact
definable subset $\overline{D}$ of $\overline{M}^m$ (see~\cite[Theorem 4.1.1]{BP} for example). Furthermore, since $g$ is Lipschitz, it
 descends to a definable function $\overline{g}:\overline{D}(\overline{M})\to \overline{M}$, which is Lipshitz as well
 and is a $(w,+)$-combinations of the $\overline{f_i}$. 
By the induction hypothesis, we then know that $\overline{g}$ is of the form $\tau(\overline{f_1},\ldots,\overline{f_n})$, where $\tau$ is a term involving constants, projections and $+,-,\min,\max$ only. Replacing $g$ by $g-\tau(f_1,\ldots,f_n)$, we may assume that  $g(D)\subseteq M_0$, as announced.

By~\cite[Fact 2.2]{elef} there exists  a special linear decomposition  $\mathcal{D}$  of $D$ such that $g$ is $\mathbb{Q}$-affine on each cell. 
Clearly $D$ is covered by the closed sets $D_i = \cl (C_i)$, for $C_i$ in $\mathcal{D}$. 
In fact if one considers the set
$\mathcal{D}'$ of all $C\in \mathcal{D}$ such that, for any $C'\neq C$, $C$ is not contained in the closure of $C'$, it follows from
\cite[Fact 2.3]{elef} that the closed sets   $D_i = \cl (C_i)$ for 
$C_i$ in $\mathcal{D}'$ already cover $D$, but we will not use this. Sets of the form $\cl (C)$ with $C \in \mathcal{D}$ will be refered to as closed cells.

We will now use the separating hyperplanes provided by Proposition \ref{prop:cell}
to build affine functions that will appear in the $(\ell,+)$-combination we are seeking for describing $g$. For this purpose, 
the inclusion $g(D(M))\subseteq M_0$ will be crucial.

\begin{claim}\label{claim:separation}
Let $C'$ and $C''$ be any two distinct cells in $\mathcal{D}$. Set $D' = \cl (C')$ and $D'' = \cl (C'')$.
There exists a function $f_{D',D''}$ in the group generated by $f_1,\ldots,f_n$, the constant functions and the coordinate functions such that
\begin{equation}\label{eqn:separation}\tag{$*$}
    g|_{D'}\leq  f_{D',D''}|_{D'} \text{ and } f_{D',D''}|_{D''}\leq g|_{D''}.
\end{equation}
\end{claim}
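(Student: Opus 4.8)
The plan is to normalise the per-cell behaviour of $g$ first, then build the functions $f_{D',D''}$ pair by pair, treating separately the case of adjacent cells and of disjoint cells, while keeping careful track of the one genuinely delicate point: that the integer multiples one is forced to introduce can be taken in $\mathbb{N}$ (and not merely in $\Gamma$).

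\emph{Per-cell normalisation.} First I would record that on each closed cell $D_i$ of Claim~\ref{claim:cell} the $\mathbb{Q}$-affine function $g|_{D_i}$ in fact coincides with one of the $f_k$: since $g$ is a $w$-combination of the $f_k$, the cell $D_i$ is covered by the finitely many sets $D_i\cap[g|_{D_i}=f_k]$, each of which is the intersection of $D_i$ with an affine subspace; as $D_i$ has nonempty interior in its affine hull and $\Gamma$ is infinite divisible, one of those subspaces must contain $\mathrm{aff}(D_i)$, so $g|_{D_i}=f_{\sigma(i)}|_{D_i}$ for an index $\sigma(i)$. Write $p=f_{\sigma(D')}$ and $q=f_{\sigma(D'')}$; both are Lipschitz with a constant $L_0\in\mathbb{N}$ (the $f_k$ have genuinely rational linear coefficients, so $L_0=\max_k\lceil\sum_j|m_{k,j}|\rceil$ works), and $p=q$ on $D'\cap D''$ whenever the latter is nonempty, both restrictions being $g$ there.

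\emph{Adjacent cells.} If $D'\cap D''\neq\emptyset$, I would invoke the separating datum from Claim~\ref{claim:cell}: a $\mathbb{Z}$-affine $H$ and an $a$ with $D'\cap D''=D'\cap H_a=D''\cap H_a$, $H-a\geq 0$ on $D'$ and $H-a\leq 0$ on $D''$, and set $f_{D',D''}:=p+N(H-a)$ for a suitable $N\in\mathbb{N}$. For any $N$ this lies in the group generated by the $f_k$, the constants and the coordinate functions ($H$ being $\mathbb{Z}$-affine, $NH$ is a $\mathbb{Z}$-combination of coordinates up to a constant). On $D'$ one has $f_{D',D''}\geq p=g$, so the first inequality of \eqref{eqn:separation} is automatic; the second, $f_{D',D''}\leq g=q$ on $D''$, unravels to the inequality $p-q\leq N(a-H)$ between two affine functions that both vanish on the common face $E=D'\cap D''$, with $a-H\geq 0$ on $D''$.

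\emph{The main obstacle: standardness of $N$.} From $|p-q|\leq 2L_0\,d(\,\cdot\,,E)$ on $D''$ it would suffice to dominate the distance to $E$ by a standard multiple of $a-H$; but this is false for a general polyhedron (a thin sliver touching $\{H=a\}$ in a low-dimensional face forces an infinitely large multiple), and this is exactly where the reduction $g(D)\subseteq M_0$ is used. The key observation is that $f_{\sigma(i)}(D_i)\subseteq M_0$ forces $f_{\sigma(i)}$ to be constant along every edge direction of $D_i$ whose length lies outside $M_0$: if $D_i$ contains a segment $[z,z+\lambda w]$ with $w$ rational and $\lambda\notin M_0$, then $\lambda\cdot\ell_{\sigma(i)}(w)\in M_0$ forces the rational number $\ell_{\sigma(i)}(w)$ to be $0$. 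Hence, after refining the cell decomposition so that each cell splits into an $M_0$-bounded factor and a factor on which all the relevant affine functions are constant, the inequality $p-q\leq N(a-H)$ becomes an inequality over an $M_0$-bounded polyhedron; since $M_0$ is archimedean, the needed $N$ is a ceiling of a now-finite ratio, so lies in $\mathbb{N}$. I expect the bookkeeping here — making this refinement compatible with the cell decomposition and the separating hyperplanes already fixed, and checking the simplicial distance estimate case by case — to be the main technical cost of the claim.

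\emph{Disjoint cells and assembly.} When $D'\cap D''=\emptyset$ I would either chain through a sequence of pairwise-adjacent cells joining $D'$ to $D''$ and add up the corresponding corrections (with signs arranged to be $\geq 0$ on $D'$ and $\leq 0$ on $D''$), or argue directly from the disjoint-case analysis at the end of the proof of Claim~\ref{claim:cell}: there is a $\mathbb{Z}$-affine $\phi$ with $\phi\geq 0$ on $D'$ and $\phi<0$ on $D''$, and one takes $f_{D',D''}=p+N\phi$; the first inequality of \eqref{eqn:separation} is again automatic and the second, $p+N\phi\leq q$ on $D''$, holds for some $N\in\mathbb{N}$ by the same $M_0$-archimedean reduction (here $\phi$ is bounded away from $0$ on $D''$, so only the standardness of $\sup_{D''}(p-q)/\inf_{D''}(-\phi)$ is at issue). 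Recording also $g|_{D'}=f_{\sigma(D')}|_{D'}$ for the "diagonal" pairs, this proves the claim, and the proof of Theorem~\ref{thm:lip-t-gen} is then concluded in the familiar way: with the $f_{D',D''}$ in hand one has $g=\min_{D'}\max_{D''}f_{D'',D'}$, an $(\ell,+)$-combination of the $f_k$, the $M$-valued constant functions and the coordinate functions.
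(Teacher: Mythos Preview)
Your overall architecture is right and you correctly isolate the standardness of $N$ as the crux, but both cases need repair, and in the adjacent case the paper's resolution is quite different from your refinement plan.

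In the disjoint case your specific choice $f_{D',D''}=p+N\phi$ need not work: you must have $N(-\phi)\geq p-q$ on $D''$, and while $q(D'')=g(D'')\subseteq M_0$, nothing forces $p(D'')=f_{\sigma(D')}(D'')$ to lie in $M_0$, so $\sup_{D''}(p-q)$ has no reason to be archimedean relative to $\inf_{D''}(-\phi)$. The paper simply drops $p$: one takes $f_{D',D''}=-kH+b$ with $b\in M_0$ a bound for $|g|$ and, by definable compactness, $a\in M_0$ with $H<-a$ on $D'$ and $H>a$ on $D''$; both inequalities then follow from $ka>2b$, which has a solution $k\in\mathbb N$ since $a,b\in M_0$ and $M_0$ is archimedean.

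In the adjacent case your plan to ``refine so that each cell splits into an $M_0$-bounded factor and a constant factor'' is the real gap: different $f_{\sigma(i)}$ have different kernels, cells need not factor, and it is unclear how any such refinement would remain compatible with the separating hyperplanes already produced by Claim~\ref{claim:cell}. The paper avoids all of this and in fact does not use $g(D)\subseteq M_0$ here. Subtract $G$ (the $\mathbb{Q}$-affine function with $g=G$ on $D'$; your $p$ works) and translate a point of $D'\cap D''$ to the origin, so that the new $g$ is $\mathbb{Q}$-\emph{linear} on $D''$ and $H$ is $\mathbb{Z}$-linear with $H\leq 0$; one seeks $m\in\mathbb N$ with $g-mH\geq 0$ on $D''$. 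First one gets this on a definably compact neighbourhood $N$ of the face $D''\cap H_0$: because $g,H$ have rational coefficients and the cell $D''$ is cut out by $\mathbb{Q}$-affine inequalities, the relevant directions are rational and the required $m$ is a genuine integer. Then pass to an RCF expansion: $D''$ is convex containing $0$, so is covered by rays from the origin, and on each ray $g-mH$ has the form $t\mapsto ct$; non-negativity on $N$ gives $c\geq 0$, hence $g-mH\geq 0$ on the whole ray, hence on all of $D''$. This homogeneity trick replaces your distance-to-$E$ estimate entirely and is what you are missing.
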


\begin{proof}[Proof of the Claim]
By Proposition~\ref{prop:cell} there exists a $\mathbb{Z}$-affine function $h$ such that the hyperplane $H =h^{-1}(0)$  satisfies $D' \cap D'' =D' \cap H=D''\cap H$, $h \geq 0$ on $D'$ and
$h  \leq 0$ on $D''$.

If $D'\cap D''=\emptyset$, using definable compactness of $D'$ and $D''$, we get that there exists  $a \in M_0$ such that $h|_{D''}<-a<0<a<h|_{D'}$.  
Moreover, by our assumption that $g(D(M)))\subseteq M_0$, there is $b\in M_0$ such that $g(D (M)) \subseteq (-b, b)$. 
For any positive integer $m$ we have $mh - g > ma - b$ on $D'$ and  $mh - g < -ma + b$ on $D''$.
Since $M_0$ is archimedean,  for $m$ large enough, we have $ma > b$, hence
condition~(\ref{eqn:separation})  is satisfied for $f_{D',D''} = mh$.

If $D'\cap D''\neq \emptyset$, take $c\in D'\cap D''$ and  let $G$ be the $\mathbb{Q}$-affine function such that $g=G$ on $D'$. 
Replacing $g$ by $g-G$, we may assume that $g=0$ on $D'$. Translating our entire set by $c$, we may assume that $c$ is the origin. Thus $g(0)=0$
and $g$ is actually the restriction of a $\mathbb{Q}$-linear function on $D''$. 
On $D'$, for any positive integer $m$, we have $mh\geq0=g$. 
For any $b\in D''$, if $h(b)=0$, then $b\in D''\cap H=D'\cap H$, hence $g(b)=0$. 
Thus, by Lemma \ref{lemm:ineg}, there exists a positive integer $m$ such that
$-g \leq -mh$ on $D''$. For such an integer $m$, we have $g \leq mh$ on $D'$ and $g \geq mh$ on $D''$.
\end{proof}

We can now  conclude the proof of Theorem \ref{thm:lip-t-gen}. Note that $g$ is a $w$-combination of the functions $f_i$;  it is thus a fortiori a $w$-combination of
the set of functions obtained by adding all the functions $f_{D',D''}$ from Claim \ref{claim:separation} to the functions $f_i$.
Take $x$ and $y$ in $D$. If they belong to the same closed cell $D' = \cl(C')$, then $g (x) = f_i (x)$
and $g (y) = f_i (y)$ for some $i$ and condition (4) in Proposition~\ref{prop:t-l-equiv} is satisfied.
If they belong to two distinct closed cells $D'$ and $D''$,
then $g (x)  \leq  f_{D',D''} (x)$ and $ f_{D',D''} (y) \leq g (y)$ by Claim \ref{claim:separation}.
Thus, by the implication (4) $\implies$ (1) in 
Proposition~\ref{prop:t-l-equiv}, we obtain that $g$ is an $\ell$-combination of
the functions $f_i$ and $f_{D',D''}$, which concludes the proof.
\end{proof}
The proof of Claim~\ref{claim:separation} actually yields the following convenient way to check if a given function is Lipschitz on a definably compact set.

\begin{coro}
Let $D'$ and $D''$ be two definably compact convex sets such that $D'\cap D''=D'\cap H_a=D''\cap H_a\neq \emptyset$ with $H_a$ an hyperplane defined by a $\mathbb{Z}$-affine function. Assume further that  $g$ a continuous function that is affine on $D'$ and $D''$ respectively, then $g$ is Lipschitz on $D'\cup D''$.
\end{coro}

\begin{rema}
Note that one can have definably compact versions of Example~\ref{eg:dl} by replacing $[0,\infty)$ with $[0,c]$ for some $c>n\cdot 1$ for all $n\in \mathbb{N}$. However, the function $g$ there is not Lipschitz because $|(0,c)-(1,c)|=1$ and $|g(0,c)-g(1,c)|=c$.
\end{rema}

\begin{rema}In the case of homogeneous linear equations, with no parameters, equivalence of  $\ell$-combination and $w$-combination goes back to work of Beynon \cite{beynon}, see also \S5.2 of \cite{glass} and \cite{Ovchinnikov} for related results. In 2011, as a student, Daniel Lowengrub rediscovered and partially generalized Beynon's results. He also gave Example \ref{eg:dl} showing that they do not hold over non-archimedean parameters.   Here we fully generalized them, after replacing continuity by a Lipschitz condition.
Our  proofs in this section make use of his ideas.
\end{rema}

\section{Complements about $\Gamma$-internal sets}

\subsection{Preimages of the standard $\Gamma$-internal subset of
$\widehat{\gm^n}$}\label{ss-preimsn}
Let $k$ be an algebraically
closed valued field
and let $X$ be an irreducible
$n$-dimensional $k$-scheme of finite type.

Let
$\Sigma_n$ be the image of the definable topological embedding from $\Gamma^n$ into $\widehat{\gm^n}$
that sends a $n$-tuple $\gamma$ to the generic point $r_\gamma$ of the closed $n$-ball with valuative radius $\gamma$ and centered at the origin.
This set
$\Sigma_n$ is the archetypal example of a $\Gamma$-internal subset, and it is contained in $(\gm^n)^{\#}_{\mathrm{gen}}$. 

Let $\phi$ be any morphism from $X$ to $\gm^n$. Set
$\Upsilon=\phi^{-1}(\Sigma_n)$. 
If $\dim\phi(X)<n$ then $\phi(\widehat X)$ does not meet $\Sigma_n$ (since the latter lies over
the generic point of $\gm^n$), so $\Upsilon=\emptyset$. 
Assume that $\dim \phi(X)=n$, which means that $\phi$ is generically finite. 
Then each point of $\Upsilon$ lies in $X^\#_{\mathrm{gen}}$ by Proposition 8.1.2 in \cite{HL}
and $\phi^{-1}(s)$ is finite for every $s\in \Sigma_n$.

The purpose of what follows is to show that $\Upsilon$ 
is $\Gamma$-internal and purely $n$-dimensional, and that this 
also holds more generally for a finite union of pre-images
for $\Sigma_n$ under various
maps from
$X\to \gm^n$. 
This is a model-theoretic version of a result that is known in the Berkovich setting, see \cite{confluentes}, Theorem 5.1.

\begin{theo}\label{theo-union-skeleta}
Let $X$ be an $n$-dimensional $k$-scheme of finite type and let $\phi_1,\ldots, \phi_m$ be morphisms
from $X$ to $\gm^n$. The finite union 
$\Upsilon:=\bigcup \phi_j^{-1}(\Sigma_n)$ is a purely $n$-dimensional 
$\Gamma$-internal subset of $\widehat X$ contained in $X^\#_{\mathrm{gen}}$. 
\end{theo}

\begin{proof}[Proof of Theorem \ref{theo-union-skeleta}]
It is sufficient to prove that $\phi_j^{-1}(\Sigma_n)$
is $\Gamma$-internal and purely $n$-dimensional for every $j$. 
Indeed, assume that this is the case. Then 
 if $j$ and $\ell$ are two indices the intersection 
$\phi_j^{-1}(\Sigma_n)\cap \phi_\ell^{-1}(\Sigma_n)$ is definable in both $\phi_j^{-1}(\Sigma_n)$ and $\phi_\ell^{-1}(\Sigma_n)$ 
by \cite[Lemma 8.2.9]{HL} so $\Upsilon$ is $\Gamma$-internal, and obviously purely $n$-dimensional
as a finite union of purely $n$-dimensional $\Gamma$-internal subsets. 

We can thus assume that $m=1$, and we write $\phi$ instead of $\phi_1$. 
By its very definition, $\Upsilon$ is pro-definable, and we have seen above that it
is contained in the strict ind-definable set $X^\#$. 
It lies therefore inside a definable subset of $X^\#$, and by using once again  \cite[Lemma 8.2.9]{HL}
we see that $\Upsilon$ is iso-definable. 
Moreover we also have seen above that $\Upsilon\to \Sigma_n$ has finite fibers, thus using
\cite[Corollary 2.8.4]{HL} or
the fact that for any tuple $a$ of elements of $\Gamma$ the algebraic and definable
closures of $a$ over $k$ coincide (\cite[Lemma 3.4.12]{haskell-h-m2006}),
one deduces that
the definable set $\Upsilon$ is $\Gamma$-internal since $\Sigma_n$ is.

It remains to show that it is purely $n$-dimensional.
Since $\Upsilon$ is contained in $X^\#$, and lies over the quasi-finite locus $V$ of $\phi$,
it is contained in $\widehat U$ for any Zariski-open subset $U$
of $V$ meeting all $n$-dimensional components of $V$; this holds in particular for $U$ the flat locus of
$\phi|_V$. The flatness of the map $U\to \gm^n$ implies that $\widehat U\to \widehat{\gm^n}$
is open by \cite[Corollary 9.7.2]{HL}, so the finite-to-one map $\Upsilon\to \Sigma_n$ is open. As a consequence
$\Upsilon$ is purely $n$-dimensional. 
\end{proof}

Our purpose is now to prove that conversely,
every
$\Gamma$-internal subset of $X^\#_{\mathrm{gen}}$
is contained in some finite union
$\bigcup_j \phi_j^{-1}(\Sigma_n)$ as above
(Theorem \ref{structure-pure-gammainternal}); 
this is an instance of the general principle according to
which $\Gamma$-internal subsets
of $X^\#$ are expected to be reasonable
(while general $\Gamma$-internal subsets
of $\widehat X$ can likely
be rather pathological). 
Originally we used this result 
through Corollary 
\ref{closure-gamma-internal}
for proving Theorem
\ref{themaintheorem}, 
but we finally do not need it anymore. 
Nonetheless, we have chosen to keep it in this paper, 
because it seems to us
of independent interest, and shows that the main
objects considered
in this work are more tractable
than one could
think at first sight.

We start with a result which will be used for proving our theorem but 
is of independent interest; this is the analogue of 
\cite{confluentes}, Theorem 3.4 (1). If $x$ is a point of $\widehat X$ and if $f=(f_1,\ldots, f_n)\colon X\to \gm^n$
is a morphism, the \emph{tropical} dimension of $f$ at $x$ is the infimum of 
$\dim \val (f) (\widehat V)=\dim \val (f)(V)$ for $V$ an arbitrary  definable subset of $X$ 
such that $\widehat V$ is a neighborhood of $x$ in $\widehat X$. 

\begin{prop}\label{prop-dimtrop}
Let $f=(f_1,\ldots, f_n) \colon X\to \gm^n$ be a morphism,
and set $\Upsilon=f^{-1}(\Sigma_n)$. Then
$\Upsilon$ is exactly the set of points of $\widehat X$ at which the tropical dimension of $f$
is equal to $n$.
\end{prop}

\begin{proof}
Let $x\in \widehat X$. A point $x$ of $\widehat X$ belongs to $\Upsilon$ if and only if 
$f_1,\ldots, f_n$ is an Abhyankar basis at $x$, \ie 
\[\val\left(\sum a_I f^I(x)\right) =\min_I \val (a_I)+\val (f^I(x))\] 
for any non-zero polynomial $\sum a_I T^I$ with coefficients in $K$.

Now let $x\in \widehat X\setminus \Upsilon$. Then $f_1,\ldots, f_n$ is \emph{not}
an Abhyankar basis at $x$. Therefore there exists a polynomial $\sum a_I T^I$ with coefficients in $K$ such that 
\[\val\left(\sum a_I f^I(x)\right) >\min_I \val (a_I)+\val (f^I(x)).\] 
Let $V$ be the subset of $X$ defined by the inequality 
\[\val\left(\sum a_I f^I\right)>\min_I \val (a_I)+\val (f^I).\]
It is a definable subset of $X$, and its stable completion is an open
neighborhood of $x$ in $\widehat X$. 
Moreover by the very definition of $V$, for every $y\in V$ there exists two distinct
multi-indices $I$ and $J$ with $\val (a_I)+\val (f^I(y))=\val (a_J)+\val (f^J(y))$, which shows that
$\val (f) (V)$ is contained in a finite union of $(n-1)$-dimensional subspaces of $\Gamma^n$. 
As a consequence, the tropical dimension of $f$ at $x$ is at most $n-1$.

Conversely, let $x\in \Upsilon$ and let $V$ be a definable open subset of $X$ such that $\widehat V$
is a neighborhood of $x$ in $\widehat X$. 
Since $x\in \Upsilon$, it is contained in $X^\#_{\mathrm{gen}}$. 
There is a dense open subset $U$ of $X$ such that $f$ induces a finite flat map from $U$ to a dense
open subscheme of $\gm^n$; then the induced
map $\widehat U\to \widehat {\gm^n}$ is open
by \cite[Corollary 9.7.2]{HL}, and since $x\in X^\#_{\mathrm{gen}}$, it belongs
to $\widehat U$; as a consequence, $f$
is open around $x$. In particular, $f(\widehat V)$ contains a neighborhood $\Omega$ of $f(x)$. 
Since $x\in \Upsilon$, the image $f(x)$ is equal to $r_\gamma$ for some $\gamma \in \Gamma^n$. 
The intersection $\Omega \cap \Sigma_n$ then contains $\{r_\delta\}_{\delta \in B}$ for $B$ some product of $n$ open intervals
containing $\gamma$, so $\val (f)(\widehat V)$ contains $B$, and is in particular $n$-dimensional. 
The tropical dimension of $f$ at $x$ is thus equal to $n$. 
\end{proof}

We are now ready to establish the announced description
of a $\Gamma$-internal subset $\Upsilon$ of $X^{\#}_{\mathrm{gen}}$. 
The case where $\Upsilon$ is purely $n$-dimensional will rely of the description
of the maximal tropical dimension locus given by the above proposition. 
The general case will then be handled 
by embedding $\Upsilon$ into a purely $n$-dimensional $\Gamma$-internal subset of $X^{\#}_{\mathrm{gen}}$ --
the basic idea for doing this is to increase the dimension of $\Upsilon$ (until $n$ is achieved) by
``following" it along a deformation retraction as built in \cite{HL}.

\begin{theo}\label{structure-pure-gammainternal}
Let $X$ be
an $n$-dimensional integral scheme of finite type over $k$,
and let $\Upsilon\subseteq X^\#_{\mathrm{gen}}$ be
a $\Gamma$-internal subset defined over $k$. 
There exists a dense open subset $U$
of $X$ and finitely many morphisms  $\phi_1,\ldots, \phi_m$ 
from $U$ to $\gm^n$ such that $\Upsilon\subseteq\bigcup_j \phi_j^{-1}(\Sigma_n)$. 
\end{theo}

\begin{proof}
Let us first assume that $\Upsilon$ is purely $n$-dimensional. 
Since $k$ is algebraically closed, after shrinking $X$ we might assume that there exist
finitely many invertible functions $f_1,\ldots, f_r$ on $X$ such that $\val (f)$ induces a $k$-definable
homeomorphism between $\Upsilon$ and a definable subset of $\Gamma^r$
(\ref{gamma-internal-generic}). For every
subset $I$ of $\{1,\ldots, r\}$
of cardinality $n$, let $f_I$ be the map from $X$ to $\gm^I$ given by the $f_i$ with $i\in I$. Since $\Upsilon$
is of pure dimension $n$, for every $x\in \Upsilon$ there is at least one subset $I$ of $\{1,\ldots, r\}$
of cardinality $n$ such that the tropical dimension of $f_I$ at $x$ is $n$. By Proposition \ref{prop-dimtrop},
this means that \[\Upsilon \subseteq\bigcup_{I\subseteq\{1,\ldots, r\}, |I|=n}
f_I^{-1}(\Sigma_n),\]
which ends the proof in this particular case. As a by-product, we get in view of 
Theorem
\ref{theo-union-skeleta} that a finite union of purely $n$-dimensional $\Gamma$-internal subset
of $X^\#_{\mathrm{gen}}$ is still $\Gamma$-internal (and of course purely $n$-dimensional).

Let us now go back to an arbitrary $\Upsilon$. In order to prove the theorem, 
it suffices by the above to show that $\Upsilon$ is contained in some
purely $n$-dimensional $\Gamma$-internal subset
of $X^\#_{\mathrm{gen}}$. 
By shrinking $X$ we can assume
that it is quasi-projective. We have already noticed
that a 
finite union of purely $n$-dimensional $\Gamma$-internal subset
of $X^\#_{\mathrm{gen}}$ is still $\Gamma$-internal and purely $n$-dimensional, 
which allows ourselves to cut $\Upsilon$ into finitely many $k$-definable pieces and to argue piecewise. 
We thus can assume that $\Upsilon$ is purely $d$-dimensional for some $d$, and we argue by descending induction on $d$, so
we assume
that our statement holds if the $\Gamma$-internal subset
involved is  equidimensional of dimension
$>d$. 

Let $\alpha$ be
a $k$-definable embedding from $\Upsilon$ into some $\Gamma^m$ given by finitely many non-zero rational functions. 
By~\cite[Theorem 11.1.1]{HL}, there exists a pro-definable deformation retraction $h:I\times \widehat{X}\to\widehat{X}$ preserving $\alpha$ with a
$\Gamma$-internal purely $n$-dimensional image $\Upsilon_{\mathrm{targ}}$ contained in $X^\#.$
Let $\Upsilon_s =\{p\in \Upsilon: h(t,p)=p \text{ for any } t\}$. By its very definition, $\Upsilon_s$ is contained in the set
$\Upsilon'$ of Zariski-dense
points of $\Upsilon_{\mathrm{targ}}$, which is a purely $n$-dimensional $\Gamma$-internal subset of $X^{\#}_{\mathrm{gen}}$. It
therefore suffices to prove the proposition for the open complement of $\Upsilon_s$
in $\Upsilon$, which is still purely $d$-dimensional. 
In other words, we can assume that $\Upsilon_s=\emptyset$. 

Let $\Upsilon''=h(I,\Upsilon)$. We claim that it is iso-definable, and thus $\Gamma$-internal.
By~\cite[Lemma 2.2.8]{HL}, $\Upsilon''$ is strict pro-definable.
Since  $\Upsilon\subset
X^\#$, the set $\Upsilon''$
is contained in $X^\#$
as well  by~\cite[Theorem 11.1.1]{HL}
and the latter is strict ind-definable.
Hence by compactness, we see that $h(I,\Upsilon)$ is a strict pro-definable subset of a definable set, thus is iso-definable. 
Note also that the homotopy built in \cite{HL} is Zariski-generalizing, so $\Upsilon''\subseteq X^\#_{\mathrm{gen}}$.

Since $\Upsilon_s=\emptyset$, for every $p\in \Upsilon$ there are some $a_p, b_p$ in $I$ with $a_p<b_p$  such that
$h|_{[a_p,b_p]}: [a_p,b_p]\to \widehat{X}$ is injective.
Since $\Upsilon''$ is $\Gamma$-internal, the induced function
$h:I\times \Upsilon\to \Upsilon''$ is a definable function in the o-minimal sense.
Let $x=h(p,t)$ be a point of $\Upsilon''$, with $t$ and $p$ defined over $k$. 
We claim that $\dim_p \Upsilon''=d+1$. Indeed,
since $\dim_p \Upsilon=d$, there exists a point $q$ in $U$ that specialises to $p$ (when viewed as a type over $k$)
and such that $\alpha(q)$ is
$d$-dimensional over $k$ (\ie, its coordinates generate a group
of rational rank $d$ over $\Gamma(k)$). 
Now up to replacing
$t$ if necessary by an
endpoint of an interval containing $t$
on which $h(p,\cdot)$ is constant,
we may assume that there exists
a
non-singleton segment $J\subseteq I$ having $t$ as
one of its endpoints
such that $h(p,\cdot)|_J$ is injective. 
If $K$ is some subinterval
of $J$ containing $t$ defined over $k(q)$ and on which
$h(q,\cdot)$ is constant then since $h$ is continuous and
thus is compatible with specialisation, both endpoints
of $K$ have to specialise to $t$. Thus there exists a
non-singleton
segment $K$ contained
in $J$ and defined over $k(q)$,
having one endpoint  $\tau$
that specialises to $t$, on which $h(q,\cdot)$ is injective.
Now let us choose an element $\tau'$ of $K$ that specialises 
to $t$ and such that $k(\tau')$ is of dimension 1 over $k(q)$. 
By construction
$h(q,\tau')$ is a point
of $\Upsilon''$ that specialises
to $h(p,t)$ and that is $(d+1)$-dimensional over $k$, whence our claim. 

It follows that $\Upsilon''$ is of pure dimension $d+1$, and it contains $\Upsilon$. By induction $\Upsilon''$ is contained in some 
purely $n$-dimensional $\Gamma$-internal subset of $X^{\#}_{\mathrm{gen}}$, and we are done. 
\end{proof}

This theorem has an interesting consequence concerning the closure
$\overline{\Upsilon}$ of $\Upsilon$,
or at least its subset $\overline{\Upsilon}_{\mathrm{gen}}$ 
consisting of Zariski-generic points (let us mention that the general structure of the closure of an arbitrary $\Gamma$-internal subset is poorly understood). 

\begin{coro}\label{closure-gamma-internal}
Let $X$ be
an $n$-dimensional integral scheme of finite type over $k$,
and let $\Upsilon\subseteq X^\#_{\mathrm{gen}}$ be
a $\Gamma$-internal subset.
The set  $\overline{\Upsilon}_{\mathrm{gen}}$ 
is contained in $X^\#$ and is $\Gamma$-internal. 
\end{coro}

\section{A first finiteness result}\label{t-finite-pure}\label{section3}

The aim in this section is to prove 
a finiteness result, Theorem \ref{theo-tfinite-pure}, which is weaker
than our main theorem but will be needed in its proof. 

\subsection{Notation}
Throughout this section we fix a valued field $k$, an
$n$-dimensional integral $k$-scheme of finite type $X$, and a $\Gamma$-internal subset 
$\Upsilon$ of $X^{\#}_{\mathrm{gen}}$. Every non-zero
$k$-rational function $f\in k(X)$ gives rise to a $k$-definable map
$\val (f) \colon \Upsilon \to \Gamma$. The set of all such maps is denoted by $\SS_k(\Upsilon)$, or simply by $\SS(\Upsilon)$
if the ground field $k$ is clearly understood from the context. Elements
of $\SS(\Upsilon)$ will be called \textit{regular functions} from
$\Upsilon$ to $\Gamma$. 
By a \textit{constant} function on $\Upsilon$ we shall always mean
a \textit{$k$-definable} constant function; \ie, an element of $\val (k)\otimes \mathbb Q$.

Assume that $\val (k)$ is divisible, in which
case $\SS(\Upsilon)$ contains the constant functions.
We shall then say for short that $\SS(\Upsilon)$ is
finitely $(w,+)$-generated \textit{up to constant functions}
if there exist a finite subset $E$ of
$\SS(\Upsilon)$ such that $\SS(\Upsilon)$
is $(w,+)$ -generated by $E$
and the constant functions.

\begin{rema}
For a subset $E$ of
$\mathbb S(\Upsilon)$ to $w$-generate $\SS(\Upsilon)$, it suffices by compactness that for every $p\in \Gamma$
and every $f\in \SS(\Upsilon)$ there exists $g\in E$ such that $f(p)=g(p)$. 
\end{rema}

Our purpose is now to show that if $\val (k)$ is divisible
and $k$ is defectless, $\SS(\Upsilon)$ is finitely $(w,+)$-generated up to constant functions.
(Recall
that a valued field $F$
is called defectless or stable if
every finite extension of $F$ is defectless; to avoid any risk
of confusion with the model-theoretic
notion of stability  use the terminology defectless instead of stable.)
The core of the proof is the following proposition about valued field extensions.

\begin{prop}\label{prop-formula-valuegroup}
Let $F\hookrightarrow  K\hookrightarrow L$ be finitely generated extensions of
valued fields, with $K=F(a)$ and $L=K(b)$. 
We make the following assumptions: 
\begin{enumerate}[1]
\item $F$ is defectless; 
\item $K$ is Abhyankar over $F$; 
\item $\mathrm{res}(K)=\mathrm{res}(F)$; 
\item $\val (L)=\val(K)$;
\item $L$ is finite over $K$. 
\end{enumerate}

Then there exists a quantifier-free formula $\phi(x,y)$ in the language of valued fields
with parameters in $F$ such that $L\models \phi(a,b)$, and such that whenever $L'=F(a',b')$ is a valued
field extension with $L'\models \phi(a',b')$ and the residue field of
$K':=F(a')$ is a regular exension of $\mathrm{res}(F)$, then 
$\val(L')=\val(K')$.
\end{prop}

\begin{proof}
Since $F$ is defectless, $K$ is defectless as well (this was proved
by Kuhlmann in \cite{kuhl}, but for the reader's convenience we give a new proof of this fact in  Appendix \ref{app:abh}
with model-theoretic tools based upon \cite{HL}, see Theorem \ref{theo-abhyankar-defectless}). 
Therefore $L^{\mathrm h}$ is a defectless finite extension of $K^{\mathrm h}$; let 
$d$ denote its degree. By assumption one has $\val (L^{\mathrm h})=\val(K^{\mathrm h})$, so that $\mathrm{res}(L^{\mathrm h})$
is of degree $d$ over $\mathrm{res}(F^{\mathrm h})$. In other words, $\mathrm{res}(L)$ is of degree $d$ over $\mathrm{res}(K)$. 

Now let $c_1,\ldots, c_r$ be elements of $\mathrm{res}(L)$ that generate it over $\mathrm{res}(F)$; for every $i$, let $P_i$ be a polynomial in $i$ variables with coefficients in $\mathrm{res}(F)$ such that $P_i[c_1,\ldots, c_{i-1}, T]$ is the minimal polynomial of $c_i$ over $\mathrm{res}(F)[c_1,\ldots, c_{i-1}]$.
Choose a  lift $Q_i$ of $P_i$ monic in $T$ with coefficients in the ring of integers of $F$, and an element $R_i$ of $F(X)[Y]$ such that $R_i (a, b)$ is a lift of $c_i$. 
Let $\Phi(x,y)$ be the formula
\[\val (R_i(x,y))=0\;\text{and}\;\val [Q_i(R_1(x,y),\ldots, R_i(x,y))]>0\;\text{for all}\;i.\]

Now $L^{\mathrm h}$ is a compositum of $L$ and $K^{\mathrm h}$, so it is generated by $b$ over $K^{\mathrm h}$. 
Hence there exists a sub-tuple $\beta$ of $b$ of size $d$ such that $b$ is contained in the $K^{\mathrm h}$-vector space generated by
$\beta$. As $K^{\mathrm h}$ is the definable closure of $K$, the latter property can be rephrased as $\Psi(a,b)$ for some quantifier-free formula $\Psi$ in the language of valued fields, with parameters in $F$. 

Now let $L':=F(a',b')$ be a valued extension of $F$,
and set $K'=F(a')$. Assume that $\mathrm{res}(K')$ is
a regular extension of $\mathrm{res}(F)$, and that
\[L'\models \Phi(a',b')\;\text{and}\;\Psi(a',b').\] Then $\Psi(a',b')$
ensures that $(L')^{\mathrm h}$ is at most $d$-dimensional over $(K')^{\mathrm h}$,
while $\Phi(a',b')$ ensures that $\mathrm{res}(L')$ contains a field isomorphic to
$\mathrm{res}(F)(c_1,\ldots, c_r)=\mathrm{res}(L)$. Since $\mathrm{res}(K')$
is regular over $\mathrm{res}(K)=\mathrm{res}(F)$, the residue field $\mathrm{res}(L')$ contains a 
field isomorphic to $\mathrm{res}(L)\otimes_{\mathrm{res}(F)}\mathrm{res}(K')$, 
which is of degree $d$ over $\mathrm{res}(K')$. 
As a consequence, 
\[[L':K']=[\mathrm{res}(L'):\mathrm{res}(K')]=d\]
and thus
\[\val(L')=\val(K').\qedhere\]
\end{proof}

\subsection{Generic types of closed balls}\label{gentypeball}
In practice, the above proposition will be applied for $a$ realizing the generic type of a ball over $F$. Let us collect here some basic facts about such types. 
If $\gamma$ is an element of $\Gamma$, we denote by $r_\gamma$ the type of the closed ball of (valuative) radius $\gamma$, which belongs to
$\widehat{\mathbb A^1}$ and even to $\stda{\mathbb A^{1}}$. More generally if $\gamma=(\gamma_1,\ldots, \gamma_n)$ we shall denote
by $r_\gamma$ the type $r_{\gamma_1}\otimes \ldots\otimes r_{\gamma_n}$, which is the generic type of the $n$-dimensional ball of 
polyradius $(\gamma_1,\ldots, \gamma_n)$ and belongs to $\stda{\mathbb A^{n}}$. 

Now let $F$ be a  valued field, let $K$ be a
valued extension of $F$ and let $a_1,\ldots, a_r, a_{r+1},\ldots, a_n$ be elements of $K^\times$. 
Assume the following: 
\begin{enumerate}[1]
\item the group elements $\val(a_1), \ldots, \val(a_r)$ are $\mathbb Z$-linearly independent over $\val (F)$; 
\item one has $\val(a_i)=0$ for $i=r+1,\ldots, n$ and the residue classes of the $a_i$ for $i=r+1,\ldots, n$ are algebraically independent
over $\mathrm{res}(F)$. 
\end{enumerate}
Set $\gamma_i=\val(a_i)$ for $i=1,\ldots, n$. Then under these assumptions one has $a\models r_\gamma|_{F(\gamma)}$. 

Conversely,  assume that $a\models r_\gamma|_{F(\gamma)}$. Then $\val(a_i)=0$ for $i=r+1,\ldots, n$, the
residue classes of the $a_i$ for $i=r+1,\ldots, n$ are algebraically 
independent over $\mathrm{res}(F)$ and
$\mathrm{res}(F(a_{r+1},\ldots, a_n)$ is generated by the residue classes of the $a_i$, so is purely transcendental of
degree $n-r$ over $\mathrm{res}(F)$. 
In particular, this is a regular extension of $\mathrm{res}(F)$. 
Now the $\val(a_i)$ for $i=1,\ldots, r$ are $\mathbb Z$-linearly independent, the group $\val(F(a_1,\ldots, a_n))$ is generated over
the group $\val(F(a_{r+1},\ldots a_n))=\val(F)$ by the $\val(a_i) $ for $i=1,\ldots, r$, so it is free of rank $r$ modulo $\val(F)$; and the residue field of $F(a_1,\ldots, a_n)$
is equal to that of $F(a_{r+1},\ldots, a_n)$, so it is purely transcendental of degree $n-r$ over $\mathrm{res}(F)$; in particular, 
this is a regular extension of the latter. 

\begin{lemm}\label{lemm-abhyankar}
Let $F$ be a valued field and let $p$ be a strongly stably dominated (global) type with canonical parameter
of definition $\gamma\in \Gamma^n$ over $F$. Let $b\models p|_{F(\gamma)}$ and set $K=F(b)$. Then: 
\begin{enumerate}
\item $\gamma$ is definable over $F(b)$ ; 
\item $F(b)$ is an Abhyankar extension of $F$. 
\end{enumerate}
\end{lemm}
\begin{proof}
Let us start with (1). Let $\Phi$ be an automorphism of the monster model fixing $F(b)$ pointwise. One has to show that $\Phi$ fixes $\gamma$, or $p$ --
this amounts to the same. Set $\delta=\Phi(\gamma)$ and $q=\Phi(p)$. Let $A$ be a $\Phi$-invariant subset of $\Gamma$ containing $\gamma$. 
Since $p$ is orthogonal to $\Gamma$,
the restriction $p|_{F(\gamma)}$ implies a complete type $r$ over $F(A)$, which coincides necessarily with the type of $b$ over $F(A)$. Thus $p$ contains the type of $b$ over $F(A)$, and so does $\Phi(p)$ since both $b$ and $A$ are $\Phi$-invariant. So $p$ and $\Phi(p)$ are two global generically stable $F(A)$-definable types that coincide over $F(A)$; it follows that they are equal, cf.
Proposition 2.35 of \cite{simon}.

Now we prove (2). By replacing $\gamma$ by a suitable subtuple if necessary, we may and do assume that
$\gamma=(\gamma_1,\ldots, \gamma_n)$ where the $\gamma_i$ are $\mathbb Z$-linearly independent over $\val (F)$. 
Now choose $c=(c_1,\ldots, c_n)$ realizing $r_\gamma$ over $F(b)$. Then by stable domination, $b$ realizes $p$ over $F(\gamma, c)$
and in particular over $F(c)$. The type $p$ is strongly stably dominated, and it is definable over $F(c)$ by construction. So $\mathrm{res}(F(b))$
is of transcendence degree $\dim X$
over $\mathrm{res}(F(c))$, and $F(b,c)$ is thus Abhyankar over $F(c)$, hence over
$F$ since $F(c)$ is Abhyankar over $F$. Then $F(b)$ is Abhyankar over $F$.
\end{proof}

\begin{theo}\label{theo-tfinite-pure}Let $F$ be a defectless valued field with divisible value group. Let 
$X$ be  an $n$-dimensional integral $F$-scheme of finite type, and let  $\Upsilon$ be a $\Gamma$-internal subset 
 of $X^{\#}_{\mathrm{gen}}\subseteq \widehat X$.  
Then  $\SS(\Upsilon)$ 
is  finitely $(w,+)$-generated up to constant functions. 
\end{theo}

\begin{proof}
We shall prove the following: for every $p\in \Upsilon$, there exists a $F$-definable subset $W$ of $\Upsilon$
containing $p$ and finitely many functions $a_1,\ldots, a_n$
in $F(X)^\times$
such that for every $x\in W$ and every $f\in \SS(\Upsilon)$, the element $\val(f(x))$ of $\Gamma$ 
belongs to the group generated by $\val(F)$ and the $\val(a_i(x))$. This will allow us to conclude. 
Indeed, assume that this statement has been proved. Then by compactness there is a finite
cover $\mathscr W$ of $\Upsilon$
with finitely many sets $W$ as above. Hence
$\SS(\Upsilon)$ is $(w,+)$-generated by the $a_i$ up to 
constant functions.

Let $p\in \Upsilon$.  This is a strongly stably dominated global type. 
Let $\gamma \in \Gamma^r$ be a canonical parameter of definition 
of $p$ and let $b$ be a realization of $p$
over $F(\gamma)$. By
Lemma \ref{lemm-abhyankar} $\gamma$ is definable over $F(b)$ and $F(b)$
is Abhyankar over $F$. As $\gamma$ is definable over $F(b)$ and
as it is defined only up to inter-definability, we can assume that
$\gamma=(\gamma_1,\ldots, \gamma_r)$
where the $\gamma_i$ are $\mathbb Z$-linearly independent over
$\val(F)$, and where each $\gamma_i$ is equal to $\val(a_i)$
for some $a_i\in F(b)$. Since $p$ is stably dominated every element of
$\val(F(b))$ belongs to the $\mathbb Q$-vector space generated by $\val (F)$ 
and the $\gamma_i$. Moreover the group
$\val (F(b))$ is finitely generated over
$\val(F)$ because $F(b)$ is Abhyankar over $F$
and as $\val(F)$ is divisible, $\val( F(b))$ is torsion-free
modulo $\val (F)$; as  a consequence, $\val(F(b))/\val(F)$ is free of finite rank. We can thus
even assume that $(\gamma_1,\ldots, \gamma_r)$ is a $\mathbb Z$-basis
of $\val(F(b))/\val(F)$. 
The valued field $F(b)$ being
Abhyankar over $F$, the family $(a_1,\ldots, a_r)$ can be completed into
an Abhyankar basis $(a_1,\ldots, a_r, a_{r+1},\ldots, a_n)$ of $F(b)$ over $F$
such that  $\val(a_i)=0$ for every $i\geq r+1$ and the residue classes of $a_{r+1}$, \dots,  $a_n$ are
algebraically independent over the residue field of $F$.
The field $F(b)$ is then algebraic over $F(a_1,\ldots, a_n)$. 
We set $a=(a_1,\ldots,a_n)$ and we now denote by $\gamma$
the $n$-uple $(\gamma_1,\ldots, \gamma_n)$ with $\gamma_i=0$ if
$i\geq r+1$, so that $\gamma_i=\val(a_i)$
for all $i$.

Since $p$ is Zariski-generic, $a=(a_1,\ldots, a_n)$ can be interpreted as an $n$-uple of rational functions on $X$, giving rise to a map $\pi$ from a dense
open subset of $X$ to $\mathbb A^n_F$. In particular, $\pi$ induces a map (which we still denote by $\pi$) from $\Upsilon$
to $\widehat{\mathbb A^n_F}$, and the fact that $a_1,\ldots, a_n$
is an Abhyankar basis of $F(b)$ means that $\pi(p)|_{F(\gamma)}=r_{\gamma}|_{F(\gamma)}$; 
as both $\pi(p)$ and $r_\gamma$ are generically stable types defined over $F(\gamma)$, it follows that $\pi(p)=r_\gamma$. 

Moreover, the tower $F(a_{r+1},\ldots, a_n)\subseteq F(a)\subseteq F(a,b)$ fulfills
the conditions of Proposition \ref{prop-formula-valuegroup}; hence the latter provides a formula 
$\phi(y, x_1,\ldots, x_r)$ with coefficients in $F(a_{r+1},\ldots, a_n)$,
which we can see as the evaluation at $(a_{r+1},\ldots, a_n)$ of a formula
$\psi(y,x_1,\ldots, x_n)$. 

Now let $W$ be the subset of $\Upsilon$ defined as the set of types $q$  satisfying the following conditions, with $\delta_i:=
\val(a_i(q))$ 
\begin{enumerate}[a]
\item $\pi(q)=r_\delta$; 
\item $\delta_i=0$ for $r+1\leq i\leq n$; 
\item $\psi(b(q), a_1(q), \ldots, a_n(q))$ holds. 
\end{enumerate}

Then $W$ is an $F$-definable subset of $\Upsilon$ -- as far as condition (a) is concerned this is
by Lemma 8.2.9 in \cite{HL}, and it contains $p$.
Now let $q$ be a point of $W$.  Set $b'=b(q)$ and $a'=a(q)$, and $\gamma'_i=\val (a'_i)$ for all $i$. 
Conditions (a) and (b) ensure that $F(a')$ has a residue field which is regular over $\mathrm{res}(F(a'_{r+1},\ldots, a'_n))$. 
Indeed, up to applying an invertible monomial transformation to $(a'_1,\ldots, a'_r)$ and renormalizing, we can assume that there is some $s$
such that $\val(a'_1), \ldots, \val (a'_s)$
are free modulo $\val(F)$ and that $\val(a'_t)=0$ for $s+1\leq t\leq r$, in which case the result is obvious since the residue field we consider is then purely transcendental of degree $r-s$ over that of $F(a'_{r+1}, \ldots, a'_n)$.

Using the fact that $F(a'_{r+1},\ldots, a'_n)\simeq F(a_{r+1}, \ldots, a_n)$ as valued extensions of $F$
(with $a'_i$ corresponding to $a_i$) and 
the definition of $\psi$, we see that $\val(F(b',a'))=\val(F(a'))$. In other words, the value group of
$q$ is generated by the $a_i(q)$ and $\val(F)$, which ends the proof. 
\end{proof}

Our purpose is now to show how the results of section \ref{t-finite-pure}
extend quite straightforwardly, at least on affine charts, when $\Upsilon$ is not assumed to consist only of
Zariski-generic points. 

\subsection{A more general setting}
We still denote by $k$ a defectless valued field with divisible value group. Let $X$
be an \textit{affine} $k$-scheme of finite type, and let $\Upsilon$ be a $\Gamma$-internal subset 
of $X^{\#} \subseteq \widehat X$. Let $X_1,\ldots, X_m$ be the irreducible Zariski-closed subsets of $X$ whose generic point supports
an element of $\Upsilon$ (it follows from Corollary 10.4.6 of \cite{HL} and finiteness of the Zariski topology of
$\Gamma_\infty^w$ that there is only a finite number of such irreducible subsets); for each $i$, set
\[X'_i=X_i\setminus \bigcup_{j, X_j\subsetneq X_i}X_j\] and $\Upsilon_i=\Upsilon\cap \widehat{X'_i}$. By construction, $\Upsilon=\coprod \Upsilon_i$ and   for 	all $i$, $\Upsilon_i$ consists only in 
Zariski-generic 
points in 
in $\widehat{X'_i}$. 
We denote by $\SS(\Upsilon)$ the set of $k$-definable functions of the form $\val (f)$ with $f$ a \emph{regular} function on $X$  (and not merely a rational function as above). 

\begin{prop}
There exists a finite set $E$ of regular functions on $X$ such that for every $f\in \SS(\Upsilon)$, there exists a finite covering $(D_a)_a$ of $\Upsilon$
by closed definable subsets and, for each $a$, an element $\lambda$ of $k$, a finite family $(e_1,\ldots, e_\ell)$
of elements of $E$, and a finite family $(\epsilon_1,\ldots, \epsilon_\ell)$ of elements of $\{-1,1\}$ 
such that: 
\begin{itemize}
\item[$\diamond$] $\epsilon_j=1$ if $e_j$ vanishes on $D_a$ ; 
\item[$\diamond$] $f=\val(\lambda e_1^{\epsilon_1}\ldots e_\ell^{\epsilon_\ell})$ identically on $D_a$. 
\end{itemize}
\end{prop}

\begin{proof}
For all $i$, we can apply Theorem \ref{theo-tfinite-pure}
to the integral scheme $X'_i$ and the $\Gamma$-internal set  $\Upsilon_i$; let $E_i$ be the finite set of rational functions on $X'_i$ provided
by this theorem. Write $E_i=\{g_{ij}/h_{ij}\}_j$ where $g_{ij}$ and $h_{ij}$ are non-zero regular functions
on the integral affine scheme $X_i$. 
For all $(i,j)$, let $g'_{ij}$ and $h'_{ij}$ denote lifts of $g_{ij}$ and $h_{ij}$ to the ring $\mathscr O_X(X)$. We then might take for $E$ the set of all
$g'_{ij}$ and $h'_{ij}$. 
\end{proof}

\section{Specialisations and Lipschitz embeddings}\label{section5}
As before, $\Upsilon$ is a $\Gamma$-internal subset of $X^{\#}_{\mathrm{gen}}$ for $X$ a separated integral scheme of finite type over a valued field $K$. The goal of this section is to show the existence of regular embeddings of $\Upsilon$ in some $\Gamma^n$ such that $\SS(\Upsilon)$ becomes exactly the set of Lipschitz definable functions under certain assumptions. We begin with some definitions.
\begin{defi}\label{def:embed}
Let $\alpha:\Upsilon\to \Gamma^n$ be a definable and continuous map and set $W=\alpha(\Upsilon)$.
\begin{enumerate}
    \item We say $\alpha$ is \emph{regular} if $\alpha$ is given by a tuple of regular functions $\Upsilon \to \Gamma$, \ie,  functions of the form $\val (f)$ with $f$ a non-zero
    rational function. 
    \item If $\alpha$ is an embedding, then we say $\alpha^{-1}:W\to \Upsilon$ is an \emph{integral parameterization} if for any rational function $f$ defined on $\Upsilon$, $\val(f)\circ \alpha^{-1}$ is piecewise $\mathbb{Z}$-affine. 
    We will also call $\alpha$ integral in this case.
    \item If $\alpha$ is an embedding, then we say $\alpha^{-1}:W\to \Upsilon$ is \emph{Lipschitz} if for any non zero rational function $f$ on $X$, $\val (f)\circ \alpha^{-1}$ is a Lipschitz function. We will also say  $\alpha$ is Lipschitz.
    \item We say $\alpha$ is a \emph{good embedding} if it is integral and Lipschitz. 
   \end{enumerate}
\end{defi}
It is immediate from the definition that if $\alpha: \Upsilon\to \Gamma^n$ is a regular embedding
(\resp regular integral embedding, \resp regular Lipschitz embedding)
and $f$ is another regular function $\Upsilon \to \Gamma$, then $(\alpha, \val (f))\colon\Upsilon\to\Gamma^{n+1}$ is also a regular embedding
(\resp regular integral embedding, \resp regular Lipschitz embedding).

\begin{lemm}\label{lemm-tfinite}
Assume that $K$ is algebraically closed
and let $\Upsilon$ be a $\Gamma$-internal subset of $X^{\#}_{\mathrm{gen}}$.
Then there exists a 
 regular integral embedding
$\alpha\colon  \Upsilon\hookrightarrow \Gamma^n$.
\end{lemm}

\begin{proof}By Theorem \ref{theo-tfinite-pure} there exists
a finite family $\alpha = (\alpha_1, \ldots, \alpha_n)$  which $(w,+)$-generates $\SS(\Upsilon)$ modulo the constant functions. Since $K$ is algebraically closed, it follows from \cite[Proposition 6.2.7]{HL}
that there exists a
regular embedding of $\Upsilon$.
We may thus enlarge $\alpha$ so that it becomes a regular embedding; it is integral by $(w,+)$-generation.
\end{proof}

We will now recall some basic facts about $\acvsf$ and specialisations, that will provide an important criterion for the existence of good embeddings.

\subsection{$\acvsf$-specialisations}\label{sec:specialisation}
We consider a triple $(K_2,K_1,K_0)$ of fields with surjective places $r_{ij} : K_i \to K_j$ for $i > j$, 
with $r_{20} = r_{10} \circ r_{21}$, 
such structures are also called $\vsf$. The places $r_{21}$ and $r_{20}$ give rise to two  valuations on $K_2$, which we denote by  $\val_{21}$ and $\val_{20}$ respectively. 
We denote by $\Gamma_{ij}$ and $\RES_{ij}$ the corresponding value groups and residue fields.
We consider $(K_2,K_1,K_0)$ as a substructure of a model of the theory $\acvsf$  introduced in~\cite[Chapter 9.3]{HL}. We will use $K_{210}$ to denote the structure $(K_2,K_1,K_0)$. It is clearly an expansion of $(K_2, \val_{21})$ via an expansion of the residue field and an expansion of $(K_2,\val_{20})$ by a convex subgroup in the value group. We will focus on the latter expansion.

Let $X$ be an affine integral scheme of finite type over $K_2$, we will use $X_{20}$ when we view $X$ as a definable set in an ambient model of $\acvf$ extending $(K_2,\val_{20})$ and $X_{21}$ is defined analogously. There is a natural map $s: X^{\#}_{20}\to X^{\#}_{21}$ which can be described as follows. 
Let $p \in X^{\#}_{20}$. By~\cite[Lemma 9.3.8]{HL}, we have that $p$ generates a complete type $p_{210}$ in $\acvsf$. 
Furthermore, by~\cite[Lemma 9.3.10]{HL}, $p_{210}$ as an $\acvsf$-type is stably dominated. Let $\dim(p)$ denote the dimension of the Zariski closure of $p$. Let $L\models\acvsf$ extending $K_{210}$ and $c\models p|L$. Since $p$ corresponds to an Abhyankar point in the space of valuations, we see that the residual transcendence degree of $\tp_{21}(c/L)$ is still $\dim(p)$, so $\tp_{21}(c/L)$ extends to a type $s(p)$ in $X^{\#}_{21}$. (Note that here we work in the restricted language where the only valuation is $\val_{21}$.)
\begin{lemm}
Let $Y\subseteq X^{\#}_{20}$ be an $\acvsf_{K_{210}}$-definable set, then $s|_Y$ is a definable function. 
\end{lemm}
\begin{proof}
By the way $s$ is defined, it is a pro-definable function by considering the $\varphi$-definitions. Note that a pro-definable function between two definable sets is definable by compactness. 
\end{proof}
We need one last lemma before stating our criterion with respect to specialisations.
\begin{lemm}\label{lem:imaginary}
Let $(K_2,K_1,K_0)\models \acvsf$ and $Y$ be a definable set of imaginaries in $\acvf_{K_{21}}$. If $Y$ is $\Gamma_{20}$-internal as a definable set in $\acvsf_{K_{210}}$, then $Y$ is $\Gamma$-internal in $K_{21}$.
\end{lemm}
\begin{proof}
By the classification of imaginaries in $\acvf$~\cite[Theorem 1.01]{haskell-h-m2006}, if $Y$ is not $\Gamma$-internal in $K_{21}$, there is an $\acvf_{K_{21}}$-definable map (possibly after expanding the language by some constants) that is generically surjective onto the residue field. By assumption, $Y$ is $\Gamma_{20}$-internal as an $\acvsf_{K_{210}}$ set. This yields a generically surjective map $\Gamma_{20}\to \RES_{21}$. Composing with the dominant place $\RES_{21}\to \RES_{20}$, we obtain an $\acvsf$-definable map $\Gamma_{20}\to \RES_{20}$ that is generically surjective. By~\cite[Lemma 9.3.1(4)]{HL}, one checks immediately that the two sorts $\Gamma_{20}$ and $\RES_{20}$ are orthogonal in $\acvsf$, hence a contradiction.
\end{proof}

\subsection{Specialisable maps and Lipschitz condition} 
Now we introduce the notion of specialisations of maps. Let $(K,v)$ be a valued field, we denote by  $\rho(K)$  the set of convex subgroups of $\Gamma(K)$. Clearly, if $K$ is of transcendence degree $m$ over the prime field, then $|\rho(K)|\leq m+1$. For each $\Delta\in \rho(K)$, we have a valuation $\val_{21}:K\to \Gamma(K)/\Delta$ given by quotienting out by $\Delta$, which gives rise to a $\vsf$ structure we shall denote by  $K[\Delta]$. Each choice
of  $\Delta$ specifies an expansion of $\acvf_K$ to $\acvsf_{K}$ by interpreting the convex subgroup to be the convex hull of $\Delta$. Moreover, by varying $\Delta$ one exhausts all the possible expansions of $\acvf_K$ to $\acvsf_{K}$. Let $X$ be an integral separated scheme of finite type over $K$ as before. We write $X_\Delta$ to denote $X$ as a definable set in $\acvf_{K[\Delta]}$. We use $s_\Delta$ to denote the map $s$ defined in Section~\ref{sec:specialisation} when we expand $\acvf_K$ to $\acvsf_{K[\Delta]}$.  We use $\Upsilon_\Delta$ to denote $s_\Delta(\Upsilon)$. Similarly, if $\alpha:\Upsilon\to \Gamma^n$ is some regular embedding, we use $\alpha_\Delta:\Upsilon_\Delta\to \Gamma_{21}^n$ to denote the corresponding map. 
\begin{defi}
Let $\alpha:\Upsilon \hookrightarrow  \Gamma^n$ be a regular embedding and
let $K$ be a field over which
$\alpha$ is defined. We say $\alpha$ is \emph{specialisable} if
for every convex subgroup $\Delta$ of $\Gamma(K)$ the map $\alpha_\Delta$ is still an embedding. 
\end{defi}

\begin{rema}\label{rem:parameter}
Note that the specialisability of $\alpha$ does not depend on the choice of $K$. Namely, let $L\supseteq K$ be an extension of valued fields, it suffices to show that if $\alpha$ is specialisable with respect to $K$, it is so with respect to $L$. Let $\Delta_L$ be a convex subgroup of $\Gamma(L)$. Note that this gives a convex subgroup $\Delta_K$ of $\Gamma(K)$ by taking intersection. Note that whether $\alpha_{\Delta_L}$ is an embedding only depends on $\acvf_{L[\Delta_L]}$, which is an expansion of $\acvf_{K[\Delta_K]}$. Hence the specialisability of $\alpha$ over $K$ guarantees that $\alpha_{\Delta_L}$ is an embedding.
\end{rema}

\begin{rema}
If $\alpha\colon \Upsilon \hookrightarrow  \Gamma^n$ is a specialisable
regular
embedding and $\beta\colon \Upsilon \to \Gamma^m$ is
any regular map, the
regular embedding $(\alpha,\beta)\colon
\Upsilon\to \Gamma^{n+m}$ is specialisable as well. 

\end{rema}

\begin{rema}\label{rem:specialisable}
Assume  $\alpha:\Upsilon\to \Gamma^n$ is specialisable and defined over $K$, and $\Upsilon'\subseteq \Upsilon$ is  definable but not necessarily over $K$. If $\alpha$ is specialisable, so is $\alpha|_{\Upsilon'}$.
This follows from a similar argument as in Remark~\ref{rem:parameter}.
More precisely, let $L\supseteq K$ be such that $\Upsilon'$ is defined over $L$, any expansion of $\acvf_L$ to $\acvsf_L$ by some $\Delta'$ gives an expansion of $\acvf_K$ to $\acvsf_K$ by some $\Delta$. As $\alpha$ is specialisable, $\alpha_\Delta$ is an embedding for any $\Delta$, thus $\alpha|_{\Upsilon'}$ is specialisable.
\end{rema}

\begin{theo}\label{thm:existence}
Let $X$ be an affine integral scheme of finite type over a valued field $K$ and let $\Upsilon\subseteq X^{\#}$ be a $\Gamma$-internal subset. Let $F$ be
a finitely generated field over which all the above is defined.
Then there exists a $F^{alg}$-definable integral regular embedding of $\Upsilon$ into $\Gamma^n$ that is specialisable.
\end{theo}
\begin{proof}
For each $\Delta\in \rho(F)$, by Lemma~\ref{lem:imaginary}, we have that $\Upsilon_\Delta\subseteq X^{\#}_\Delta$ is $\Gamma$-internal in $\acvf_{F[\Delta]}$.

Consider $X$ as embedded in some affine space. 
By~\cite[Corollary 6.2.5]{HL}, for each $\Delta$, there are finitely many polynomial functions $h^\Delta_i $ such that $h^\Delta=(\val(h_1^\Delta),\ldots,\val(h_s^\Delta))$ is injective on $\Upsilon$.

Moreover the $h^\Delta_i$'s can be found to be defined over $F^{alg}$ by the
proof of~\cite[Corollary 6.2.5]{HL} (or more precisely,~\cite[Lemma 6.2.2]{HL}). Since there are only finitely many such $\Delta$'s to consider,
putting them as the coordinates,
we get some specialisable embedding as desired,
which
can be made integral by concatenation with an arbitrary integral regular embedding, whose existence
follows from Lemma \ref{lemm-tfinite}. 
\end{proof}

\begin{rema}
In the situation of interest for classical non-archimedean geometry, the ground field $K$ will be algebraically closed and equipped
with a valuation whose group embedds into $\mathbb{R}$ and has therefore no non-trivial proper convex subgroup. The reasoning above then shows 
that \textit{any} $K$-definable regular embedding from $\Upsilon$ into $\Gamma^s$ is specialisable.
\end{rema}

\begin{prop}\label{speandlip}
Let $X$ be an affine integral scheme of finite type over a valued field $K$ and let $\Upsilon\subseteq X^{\#}$ be a $\Gamma$-internal subset. If $\alpha:\Upsilon\hookrightarrow  \Gamma^n$ is a specialisable embedding, then the image of $\mathbb{S}(\Upsilon)$ is contained in the group of Lipschitz functions. In other words, all the log-rational functions are Lipschitz and $\alpha$ is Lipschitz.
\end{prop}
\begin{proof}
We let $W=\alpha(\Upsilon)$ and use $p_w$ to denote $\alpha^{-1}(w)$ for $w\in W$. Assume there is some $f\in K(X)$ such that $w\mapsto p_w(f)$ is not Lipschitz. Going to an elementary extension, we may assume there is $w_1,w_2 \in W$ such that $|p_{w_2}(f)-p_{w_1}(f)|>n|w_1-w_2|$ for all $n\in \mathbb{N}$. Take $C$ to be the convex subgroup generated by $|w_1-w_2|$. Consider $L$ to be the same field with the valuation given by quotienting out by $C$. By our assumption on specialisability, we have that $\alpha_L$ is an embedding. However, we have
$\overline{w_1}=\overline{w_2}$, while $\overline{p_{w_1}(f)}=p_{w_1}(f)+C\neq p_{w_2}(f)+C=\overline{p_{w_2}(f)}$, a contradiction.
\end{proof}

\begin{coro}\label{goodexistence}
Let $X$ be an affine integral scheme of finite type over an algebraically closed valued field $K$ and let $\Upsilon\subseteq X^{\#}$ be a $\Gamma$-internal set.
Then there exists a good embedding $\Upsilon \hookrightarrow \Gamma^n$.
\end{coro}

\begin{proof}
The embedding provided by 
Theorem \ref{thm:existence}
is $K$-definable, and it is good in view of
Proposition \ref{speandlip}.
\end{proof}

\section{The main theorem}
In this section, we prove the theorem stated in Section~\ref{sec:about-proof} and we transfer it into the Berkovich setting. 

\begin{lemm}\label{lem-stable-min}
Let $k$ be a valued field with infinite residue field, let $X$ be a geometrically integral $k$-scheme and let
$\Upsilon\subset X^\#
_{\mathrm{gen}}$ be a $k$-definable $\Gamma$-internal subset defined
over $k$.  The group $\SS(\Upsilon)$ is stable under $\min$ and $\max$.
\end{lemm}

\begin{proof}
It is enough to prove stability under $\min$. Let $p$ be a point of $\Upsilon$. If there exists a scalar $a$ of valuation zero such that
$\val(f(p)+ag(p))>\min(\val (f)(p), \val (g(p)))$ then $\mathrm{res}(a)$ is a well-defined element of the residue field
which we call $\theta(p)$; otherwise we set (say) $\theta(p)=0$. 
Then $\theta$ is a $k$-definable map from the $\Gamma$-internal set $\Upsilon$ to the residue field. 
By orthogonality between the value group and the residue sorts, $\theta$ has finite image. Since $k$ has infinite residue field, 
there exists an element $a\in\mathscr O_k^\times$ whose residue class does not belong to the image of $\theta$. 
Then $f+ag\neq 0$ and $\val(f(p)+ag(p))=\min (\val (f(p)), \val (g(p)))$ for all $p\in \Upsilon$. 
\end{proof}

In the situation of the lemma above, it thus makes sense to speak about
an $(\ell,+)$-generating system of elements of $\SS(\Upsilon)$. As for $(w,+)$-generation, we shall say for short that $\SS(\Upsilon)$
is finitely $(\ell,+)$-generated up to the constant functions if 
there exists a finite subset $E$ of $\SS(\Upsilon)$ such that $E$ and the $k$-definable constant functions (\ie, the constant functions taking values in $\mathbb{Q}\otimes \val(k^\times)$) $(\ell,+)$-generate $\SS(\Upsilon)$.

\begin{theo}\label{themaintheorem}
Let $k$ be
an algebraically closed valued field. Let $X$ be an integral scheme of finite type over $k$  and let $\Upsilon\subseteq X^\#_{\mathrm{gen}}$ be a $\Gamma$-internal
subset defined over $k$.
The group $\SS(\Upsilon)$
is stable under $\min$ and $\max$ and
is finitely $(\ell,+)$-generated up to constant functions. 
\end{theo}

\begin{proof}
By Theorem~\ref{thm:existence}, there is a $k$-definable
good embedding $\alpha: \Upsilon\to \Gamma^n$ for some $n$. 
By Theorem~\ref{theo-tfinite-pure}, $\SS(\Upsilon)$ is
$(w,+)$-finitely generated up to  constant functions.
Let $f_1,\ldots,f_m$ be finitely
many $k$-rational functions whose valuations $(w,+)$-generate $\SS(\Upsilon)$ up to 
constant functions, adjoining the $\val (f_i)$ as new coordinates of
$\alpha$, we may furthermore assume
that $\SS(\Upsilon)$ 
is $(w,+)$-generated by the components of $\alpha$
and the constant functions. 
By possibly enlarging once again $\alpha$ and replacing
$X$ with a suitable dense Zariski-open subset we can also assume that
$\alpha=\val (f)$ for some closed immersion
$f\colon X\to \gm^n$;
in particular, $\alpha$ is definably proper
and induces a definable
homeomorphism $\Upsilon\simeq \alpha(\Upsilon)$.

Let $f$ in $\SS(\Upsilon)$. Since $\alpha$ is a specialisable embedding whose coordinates $(w,+)$-generate
$\SS(\Upsilon)$ up to the constant functions, the composition
$f\circ \alpha^{-1}$ viewed
as a $\Gamma$-valued function on 
$\alpha(\Upsilon)$ is piecewise $\mathbb{Z}$-affine and Lipschitz. In view of
Theorem~\ref{thm:lip-t-gen}, this implies that
$f\circ \alpha^{-1}$ is an $\ell$-combination of finitely
many $\mathbb{Z}$-affine functions,
so that $f$ itself
is an $(\ell,+)$-combination of the
components of $\alpha$ and of 
constant functions.
\end{proof}

\begin{rema}\label{wgeneration-scalarextension}
Assume that $k$ is algebraically closed and let
$(f_1,\ldots,f_n)$ be a family of rational functions on $X$ such that 
$\SS(\Upsilon)$ is $(w,+)$-generated
(resp. $(\ell,+)$-generated)
by the $\val (f_i)$ and the constant
($k$-definable) functions.
Then for every algebraically closed
extension $L$ of $k$,
the $\val (f_i)$ and the $L$-definable
constant functions $(w,+)$-generate (resp. 
$(\ell,+)$-generate) $\SS_L(\Upsilon)$
(work with a bounded family of
rational functions and use compactness). 
\end{rema}

Our purpose is now
to state and prove the Berkovich avatar of our main theorem. We fix a non-archimedean complete field $F$. For all $n$, we denote by $S_{F,n}$ the closed subset $\{\eta_r\}_{r\in (\mathbb{R}^\times_+)^n}$
of $\mathbf G_{\mathrm m,F}^{n,\mathrm{an}}$, where $\eta_r$ is the semi-norm
$\sum a_I T \mapsto \max \abs{a_I}r^I$.

In \cite{confluentes}, 4.6  a general notion of a skeleton is defined for an
$F$-analytic space; the subset $S_{n,F}$ 
of $\mathbf G_{\mathrm m,F}^{n,\mathrm{an}}$
is the archetypal example of such an object. 

But this notion is however slightly too analytic for our
purposes here: indeed, if $X$ is an algebraic variety
over $F$ then $X\an$ might have plenty of skeleta
in the sense of \cite{confluentes} that cannot be handled by our methods, since they would not
correspond to any $\Gamma$-internal subset  of $\widehat X$, by lack of algebraic definability. 
For instance, assume that $F$ is algebraically closed and non-trivially valued, and let $f$ be any non-zero analytic function of $\mathbf A^{1,\mathrm{an}}_F$ with countably many zeroes. Let $U$ be the non-vanishing locus of $f$, and let $\Sigma$ be the preimage of $S_{1,F}$ under $f
\colon U\to \gm$. Then $\Sigma$ is a skeleton in the sense of \cite{confluentes}, but topologically this is only a locally finite graph, with countably many branch points. 
We thus shall need to focus on ``algebraic" skeleta. 

\begin{theo}\label{mtberk}
Let us assume that $F$ is algebraically closed. Let $X$ be an integral $F$-scheme of finite type, and let $n$ be its dimension. Let $\phi_1,\ldots,\phi_r$ be maps $U_i\to \mathbf G_{\mathrm m, F}^n$ where the $U_i$ are dense open subsets of $X$, and let $S\subseteq X\an$ be a subset of
$\bigcup_i \phi_i^{-1}(S_n)$ defined by a Boolean combination of norm inequalities between non-zero rational functions. 

There exist finitely many non-zero rational functions $f_1,\ldots,  f_m$ on $X$ 
such that the following hold. 

\begin{enumerate}[1]
\item
The functions $\log \abs{f_1},\ldots, \log \abs{f_m}$ identify $S$
with a piecewise-linear subset of $\mathbb{R}^m$ (\ie, a subset defined by a Boolean combination
of inequalities between $\mathbb{Q}$-affine functions). 
\item The group of real-valued functions on $S$ of the form $\log \abs g$ for
$g$ a non-zero rational function on $X$
is stable under $\min$ and $\max$ and is 
$(\ell,+)$-generated by the $\log \abs{f_i}$
and the constant functions of the form $\log \abs{\lambda}$ with $\lambda \in F^\times$. 
\end{enumerate}
\end{theo}

\begin{proof}
The subset $\Sigma$ of $\widehat X$ given by the same definition as $S$ mutatis mutandis is a $\Gamma$-internal set
containd in $X^\#_{\mathrm{gen}}$ to which we can thus apply 
Theorem \ref{themaintheorem}. The theorem above then follows by noticing that if $L$ denotes a non-archimedean maximally complete extension of $F$ with value group the whole of $\R_+^\times$, then $S$ is naturally homeomorphic to $\Sigma (L)$.
\end{proof}

\begin{rema}Note that by Theorem
\ref{structure-pure-gammainternal}
the condition that $S$ is a subset of some
$\bigcup_i \phi_i^{-1}(S_n)$
holds as soon as  $S$ is the image of  $\Upsilon (L)$ under the projection $\widehat{X} (L) \to X\an$ 
with $\Upsilon$ some $F$-definable
$\Gamma$-internal  subset of $X^{\#}_{\mathrm{gen}}$ and $L$ as in the above proof.
\end{rema}

\begin{rema}\label{contrex}
We insist that we require that the ground field
be algebraically closed. Indeed, our theorems (for stable completions
as well as for Berkovich spaces)
definitely
do not hold over an arbitrary non-Archimedean field, 
\textit{even in a weaker version with $(w,+)$-generation 
instead of $(\ell,+)$-generation}, as witnessed
by a counter-example that was communicated to the authors
by Michael Temkin (this counterexample involves a field with defect, we do not know if our theorem holds
for defectless fields with divisible value group as in Theorem \ref{theo-tfinite-pure}).

For the reader's convenience we will first detail
the original counter-example
which is written
in the Berkovich's language,
and then a model-theoretic variant
thereof in the 
Hrushovski-Loeser's language. 

\subsubsection{The Berkovich version}
Let $F$ be a non-archimedean field and let $\mathbb F$
be the completion of an algebraic closure of $F$ ; assume that
the residue field of $F$
is of positive characteristic $p$ and that
$F$ admits an immediate extension $L$ of degree $p$, say $L:=F(\alpha)$ 
with $\alpha\in \mathbb F$. By general valuation theory, the distance $r$ between $\alpha$ and $F$ is not achieved. 

For every $s\geq r$ let $\xi_s$ be the image on
$\mathbf P^{1,\mathrm{an}}_F$ of the
Shilov point of the
closed $\mathbb F$-disc
with center $\alpha$ and radius $s$. If $s>r$ there exists $\beta_s$
in $F$ with $\abs{\alpha-\beta_s}\leq s$,
and $\xi_s$ is the Shilov point of the closed $F$-disc
with center $\beta_s$ and radius $s$; but as far as $\xi_r$ is concerned,
it is the Shilov point of an affinoid domain $V$ of $\mathbb P^{1,
\mathrm{an}}_F$ without rational point. 

Let $v$ be a rigid point of $V$. It corresponds to an element $\omega$ 
of $\mathbb F$ algebraic over $F$ and whose distance to $F$ is equal to $r$ and not achieved. Therefore the extension $F(\omega)$ has defect over $F$, which forces its degree to be divisible by $p$. In other words, $[\mathscr H(v):F]$ is divisible by $p$. 

In particular if $f$ is any non-zero element
of $F(T)$, the divisor
of $f|_V$ has degree divisible by $p$, so that there
exists some $s>r$ such that the slope of $\log \abs f$ on $(\xi_r,\xi_s)$ is divisible by $p$. 

Now assume that there exists a finite set
$E$ of non-zero rational functions such
that on the skeleton $[\xi_r,\infty)$, every function of the form
$\log \abs g$ with $g$
in $F(T)^\times$ belongs piecewise to the group generated by 
the $\log \abs h$ for $h\in E\cup F^\times$.
Then there would exist some $s>r$
such
that for every $g$ as above,
all slopes of $(\log \abs g)|_{[\xi_r,\xi_s]}$
are divisible by $p$.
Taking $g=T-\beta_s$
leads to a contradiction.

\subsubsection{The model-theoretic version}
Let $F$ be a perfect valued field of
positive residue characteristic $p$
such that there exists an irreducible
separable
polynomial $P\in F[T]$ with the following
property: \textit{the smallest closed
ball containing
all roots of $P$ has no $F$-rational points, but any
bigger $F$-definable closed ball has one $F$-point}
(it is not difficult to exhibit such pairs
$(F,P)$; the easiest case
is that of pure characteristic $p$, where one can take
any perfect field $F$ with an height 1 valuation 
having an element $s$ with $\val(s)<0$
such that $T^p-T-s$ has no root in $F$, and
take $P=:T^p-T-s$; for instance, the perfect closure
of $\F_p(s)$ equipped
with the $(1/s)$-adic valuation will do the job). 

Let $b$ be the smallest closed ball 
containing the roots of $P$, and let 
$B$ be a bigger $F$-definable
closed ball. Let $I$ be the interval between their generic points; this is
a
$\Gamma$-internal subset of $\widehat{\P^1}$
contained in $\P^{1,\#}_{\mathrm{gen}}$. 
This interval is naturally parameterized by the interval $[V,v]$ where $V$
is the valuative radius of $B$ and $v$ is that of $b$, 
and we will identify them. In particular a linear function
from $I$ to $\Gamma$
has a well-defined slope.  We will be interested in the germ of functions on $I$ towards
the endpoint $v$. 
The number of roots in $b$ of every 
irreducible polynomial of $F[T]$ is divisible
by $p$, 
for otherwise averaging the roots would produce an $F$-rational
point in $b$. Hence the valuation of every polynomial,
and indeed every rational function in $F(T)$, has slope divisible by $p$
on some interval $(i,v)$
inside $I$. If the group of functions $\val(f) | I$
were finitely $(w,+)$-generated 
up to constants, there would be a fixed $i<v$ (defined over $F$) such that all
val-rational functions have slope divisible by $p$ on $[i,v]$. Now pick an 
$F$-rational point $a$ in the closed ball containing $b$ of valuative
radius $(i+v)/2$; then $T-a$ has slope one on $(i, (i+v)/2)$, contradiction.  
\end{rema}

\section{Applications to (motivic) volumes of skeleta}\label{sec:volumes}

It follows from Theorem \ref{theo-tfinite-pure} on finite $(w,+)$-generation that skeleta are endowed with a canonical piecewise
$\mathbb{Z}$-affine structure. In
this section we explain how 
this implies the existence of canonical volumes for skeleta.

\subsection{Some Grothendieck semirings of $\Gamma$}
We shall consider various Grothendieck  semirings of $\Gamma$
analogous to those introduced in \S 9 of \cite{HK} (see also \cite{HK2} for a detailed study of the rich structure of such semirings).
Let $\Gamma$ be a non-trivial
divisible ordered abelian group 
and let $A$ be a fixed subgroup of $\Gamma$.
We work in 
the theory $\DOAG_A$ of (non-trivial)
divisible ordered Abelian groups with distinguished constants for elements of the subgroup  $A$.
Fix a non negative integer $N$. One defines a category $\Gamma (N)$ as follows (since there is no risk of confusion we omit the $A$ from the notation).
An object of $\Gamma (N)$ is a finite disjoint union of subsets of $\Gamma^N$ defined by linear equalities and inequalities with $\mathbb{Z}$-coefficients and constants in $A$. A morphism $f$ between 
two objects 
$X$ and $Y$  of $\Gamma (N)$ is a bijection such that  there exists a finite partition $X = \bigcup_{1 \leq i \leq r} X_i$ with
$X_i$ in $\Gamma (N)$, matrices 
$M_i \in \mathrm{GL}_N(\mathbb{Z})$ and constants $a_i \in  A^N$, such that for $x \in X_i$,
$f (x) = M_i x + a_i$.
We denote by $K_+ (\Gamma (N))$  the Grothendieck semigroup of  this category, that is the free abelian semigroup generated by isomorphism
classes of objects of  $\Gamma (N)$ modulo the cut and paste relation $[X] = [X\setminus Y ] + [Y]$ if $Y \subseteq X$.
The inclusion map $\Gamma^N \to \Gamma^{N+1}$ given by $x \mapsto (x, 0)$ induces an inclusion functor
$\Gamma (N) \to \Gamma (N+1)$ and we denote by $\Gamma (\infty)$ the colimit of the categories 
$\Gamma (N)$. We may identify the Grothendieck semigroup $K_+ (\Gamma (\infty))$ of $\Gamma (\infty)$ with the colimit of the semigroups
$K_+ (\Gamma (N))$. It is endowed with a natural structure of a semiring.
We may also consider the  full subcategory   $\Gamma^{\mathrm{bdd}} (N)$ of
$\Gamma (N)$
consisting of bounded sets, that is definable subsets of $[-\gamma, \gamma]^N$ for some non negative $\gamma \in \Gamma$ (which can be chosen in $A$),
and the  corresponding full subcategory   $\Gamma^{\mathrm{bdd}} (\infty)$ of
$\Gamma (\infty)$ and its Grothendieck semiring.
The above categories admit   natural filtrations $F^{\cdot}$ by dimension, with 
$F^n$
the subcategory generated by objects of o-minimal dimension $\leq n$ and we will also consider the induced filtration on Grothendieck rings.

\subsection{Volumes}
Let $R$ be a real closed field. Fix integers $0 \leq n \leq N$.
Let $W$ be a  bounded piecewise $\mathbb{Z}$-linear definable subset of  $R^N$ of o-minimal dimension $n$. We denote by 
$\mathrm{vol}_{n} (W)$ its 
$n$-dimensional volume which can be defined in the following way. After decompositing into simplices, it is enough to define the volume
of a simplex spanned by $n +1$-points, which one can do via the classical formula over $\mathbb{R}$, choosing
the normalization such that,
for any family $(e_1, \dots, e_n)$  of $n$ vectors in $R^N$ with integer coordinates which can be completed to 
a basis of the abelian group $\mathbb{Z}^N$, the volume of the simplex with vertices the origin and the endpoints of $e_1$, \dots, $e_n$ is $\frac{1}{n!}$.
When $R = \mathbb{R}$, $\mathrm{vol}_{n}$ is well-defined thanks to the existence of the Lebesgue measure. In general, 
the well-definedness of  $\mathrm{vol}_{n}$ 
follows from the case of $\mathbb{R}$ since
after increasing $R$ one can assume it 
is an 
an elementary extension of $\mathbb{R}$.

Thus, for any embedding $\beta: A \to R$ with $R$ a real closed field
and any integer $n$,
$\mathrm{vol}_{n}$
 induces a morphism
 $\mathrm{vol}_{n, \beta} : F^nK^{\mathrm{bdd}} _+ (\Gamma (N))
 /F^{n-1} K^{\mathrm{bdd}} _+ (\Gamma (N))\to R$ which stabilizes to a morphism
 $\mathrm{vol}_{n, \beta}  : F^nK^{\mathrm{bdd}} _+ (\Gamma (\infty))/F^{n-1} K^{\mathrm{bdd}} _+ 
 (\Gamma (\infty)) \to R$.

\subsection{Motivic volumes of skeleta}\label{mvs}
Let us assume that we are in the setting of Theorem \ref{theo-tfinite-pure},
that is,  $k$ is a defectless valued field with divisible value group,
$X$ is  an $n$-dimensional integral $k$-scheme of finite type and   $\Upsilon$ is a  $\Gamma$-internal subset 
 of $X^{\#}_{\mathrm{gen}}\subseteq \widehat X$.  
Then, by
Theorem \ref{theo-tfinite-pure},
$\SS(\Upsilon)$ 
is  finitely $(w,+)$-generated up to constant functions. 
Let $\alpha : 
\Upsilon \to \Gamma^N$
be a definable embedding of the form
$(\val (f_1), \cdots, \val(f_N))$ where the functions 
$\val (f_i)$ are 
$(w,+)$-generating $\SS(\Upsilon)$ up to constant functions.
We take for $A$ the group $\val (k^{\times})$.

\begin{prop}The class of $\alpha (\Upsilon)$ in
$K_+ (\Gamma (\infty))$ does not depend on $\alpha$.
\end{prop}

\begin{proof}Consider $\alpha' : 
\Upsilon \to \Gamma^{N'}$
another definable embedding of the form
$(\val (f'_1), \cdots, \val(f'_{N'}))$ with the functions 
$\val (f'_i)$ 
$(w,+)$-generating $\SS(\Upsilon)$ up to constant functions.
After adding zeroes we may assume $N = N'$.
Since  the functions 
$\val (f_i)$ are 
$(w,+)$-generating $\SS(\Upsilon)$ up to constant functions,
there exists a finite partition of $\Upsilon$ into
definable pieces $\Upsilon_j$ such that on each $\Upsilon_j$ 
we may write
$(\val (f'_i)) = M_j ((\val (f_i))) + a_j$ with $M_j$ a matrix with coefficients in
$\mathbb{Z}$ and $a_j \in \Gamma^N$.
Exchanging $\alpha$ and $\alpha'$ we get that the matrix $M_j$ lies  in $\mathrm{GL}_N(\mathbb{Z})$.
\end{proof}

Thus, to any  $\Gamma$-internal subset  $\Upsilon$ 
 of $X^{\#}_{\mathrm{gen}}\subseteq \widehat X$,
 we may assign a well defined motivic volume
$\mathrm{MV} (\Upsilon)$ in the ring
$K_+ (\Gamma (\infty))$, namely 
 the class of 
$\alpha (\Upsilon)$  for any  embedding $\alpha$ as above.

If $\Upsilon$ is contained in a definably compact set, $\alpha (\Upsilon)$ is bounded, thus
$\mathrm{MV} (\Upsilon)$ lies in
 $F^n K^{\mathrm{bdd}} _+ (\Gamma (\infty))$ and we can consider
 its $n$-dimensional volume
 $\mathrm{vol}_{n, \beta} (\mathrm{MV} (\Upsilon))$ in $R$
  for any embedding $\beta: \Gamma \to R$ with $R$ a real closed field.
  Similarly, any definable subset of $\Upsilon$ of o-minimal dimension $m \leq n$ contained in 
 a definably compact set has an $m$-dimensional volume in $R$.

\subsection{Berkovich variants}
These constructions admit direct variants in the Berkovich setting which are transfered
from the  previous section \ref{mvs} similarly as in the proof of 
Theorem \ref{mtberk}.

Fix an algebraically closed non-archimedean complete field $F$ with value group $A$.
Let $X$ be an integral $F$-scheme of finite type and of  dimension $n$.
Let $S\subseteq X\an$ be an algebraic skeleton as in the statement of Theorem \ref{mtberk}.
Then one can assign similarly as above a well defined class
$\mathrm{MV} (S)$ to $S$ in 
 in
$K_{+} (\mathbb{R}(\infty))$. Furthermore, 
if $S$ is relatively compact, since $A \subseteq \mathbb{R}$, one can consider its 
$n$-dimensional volume
 $\mathrm{vol}_n (\mathrm{MV} (S))$ in $\mathbb{R}$, or more generally its $m$-dimensional volume if
 $S$ of dimension $\leq m$.

\begin{rema}Note that all the invariants defined above (motivic and actual volumes) are invariant 
under birational automorphisms and Galois actions.
\end{rema}

\appendix

\section{Abhyankar valuations are defectless: a model-theoretic proof}\label{app:abh}

Let~$K$ be a field equipped with a Krull valuation $v$ and let~$L$ be a finite extension of~$K$. Let~$v _1,\ldots, v_n$
be the valuations on~$L$ extending~$v$, and for every~$i$, let~$e_i$ and~$f_i$ be the ramification and inertia indexes 
of the valued field extension $(K,v)\hookrightarrow (L,v_i)$. One always has
$\sum e_if_i\leqslant [L:K],$ and the extension~$L$ of the valued field~$(K,v)$ is said to be {\em defectless}
if equality holds. We shall say that~$(K,v)$ is {\em defectless}
if every finite extension of it is defectless (such a field is also often called \textit{stable} in the literature, 
but we think that defectless is a better terminology, if only because stable has a totally different meaning in model theory). 

We shall use here the notion of the \textit{graded}
residue field of a valued field in the sense of Temkin, 
see 
\cite{temkin2004} (we will freely apply the basic facts about
graded commutative algebra which are proved therein). A more model-theoretic approach of the latter was 
introduced independently by the second
author and Kazhdan in \cite{HK} 
with the notation $\mathrm {RV(\cdot)}$ which we have decided
to adopt here. 
The key point making this notion
relevant for the study of defect
is the following obvious remark: the product~$e_if_i$ can also be interpreted as the degree of
the graded
residue extension 
$\mathrm{RV}(K,v)\hookrightarrow \mathrm{RV}(L,v_i)$.

{\em Examples.} Any algebraically closed valued field is defectless; any complete discretely valued field is
defectless ; 
the function field of an irreducible normal algebraic variety, endowed with the discrete valuation associated to 
an irreducible divisor, is defectless; any valued field
whose residue characteristic is zero is defectless.

The purpose of this appendix is to give a new proof of the following well-known theorem.

\begin{theo}\label{theo-abhyankar-defectless}
Let $(K,v)$ be a defectless valued field, and let $G$ be an abelian ordered group containing
$v(K^\times)$.
Let $g=(g_1,\ldots, g_n)$ be a finite family of elements of $G$. Endow 
$K(T)=K(T_1,\ldots,T_n)$
with the ``Gauss extension $v_g$ of $v$ with parameter $g$",
\ie
\[v_g(\sum a_I T ^I)=\min_I v(a_I)+Ig.\]
The valued field $(K(T), v_g)$ is still defectless. 
\end{theo}

\medskip
This result has been given several proofs by Gruson, Temkin, Ohm, Kuhlmann, Teissier (see \cite{gruson}, \cite{temkin2010}, \cite{ohm1989}, \cite{kuhl}, \cite{teissier}). To our knowledge, the
first proof working in full generality was that of Kuhlmann, the preceeding proofs requiring some additional assumptions on~$K$ and/or on the~$g_i$. 
Our proof follows a more model theoretic route, relying on  the definability of the 
defectless locus.

\begin{proof}
It is rather long. 
Before writing it down, let us describe roughly its main steps. 
One first reduces to the case where $n=1$ by arguing inductively
(and one sets $T=T_1$ and $g=g_1$) and then to the case
where $K$ is algebraically closed (\ref{reduc-case-algclosed}), which
requires to understand what happens
when one performs a finite ground field extension of $K$, and this is the point where defectlessness of $K$ is needed. 

Then one shows that if $(L,w)$ is an algebraically closed valued extension of $K$ whose value group 
contains $\val(K^\times)+\Z g$, then $F$ is defectless over $(K(T),v_g)$ if and only if $F_L$
is defectless over $(L(T), w_g)$ (\ref{subsubsection-rational-radius}). This ultimately relies on the description
of definable maps from $\Gamma$ to the space of lattices (or semi-norms) on a vector space
(\cite{HL}, Lemma 6.2.2), which itself rests on the 
work \cite{haskell-h-m2006}
on 
imaginaries in $\acvf$. This enables us to assume that the valuation of $K$ is non-trivial and $g\in v(K^\times)$. 
Now one proceeds as follows: 

\begin{itemize}
\item[(A)] One shows (\ref{defectless-definable})
that there exists a $K$-definable subset $D$ of $\Gamma$ so that for every $h\in v(K^\times)$ 
the extension $F$ of $(K(T),v_h)$ is defectless if and only if $h\in D(K)$ (and this holds universally, \ie~
this equivalence remains true after base change from $K$ to an arbitrary model of $\acvf$); 

\item[(B)] One shows that $D$ is both definably open and definably closed
(\ref{d-openandclosed}) and non-empty (\ref{d-nonempty}),
so that $D$
is the whole of $\Gamma$; in particular $g\in D$, which ends the proof.
\end{itemize}

Statement (A) rests on the fact that on a smooth projective curve there exists a line bundle whose quotients 
of non-zero global sections generate (universally) the group of invertible rational functions (this follows from the
Riemann-Roch theorem); the proof uses this fact both directly and indirectly, through one 
of its important consequences in Hrushovski-Loeser's theory: 
definability (and not merely pro-definability, as in higher dimensions) of the stable completion of a curve.
And statement (B) ultimately relies on defectlessness of the function field of a curve equipped with the discrete valuation associated
to a closed point.

\subsubsection{First easy reduction}
By a straightforward induction argument, we reduce to the case where $n=1$, and we write now
$T$ instead of $T_1$ and $g$ instead of $g_1$. 

\subsubsection{Reduction to the case where $K$ is algebraically closed}
\label{reduc-case-algclosed}
We choose an arbitrary extension $w$
of $v$
to an algebraic closure $\overline K$ of $K$, and we endow
the field $\overline K(T)$
with the Gauss valuation $w_g$. 
We assume that 
$(\overline K(T),w_g)$ is defectless, and we want to prove that $(K(T),v_g)$ is
defectless too; this is the step in which our defectlessness assumption
on $K$ will be used. So, let $F$ be a finite extension of $K(T)$, and let us prove that it is defectless. 

We begin with a general remark which we will
use several times.  Let $K'$ be a finite extension of $K$. For every extension $v'$ of $v$
on $K'$ there is a unique extension of $v_g$ on $K'(T)$ whose restriction to $K'$ coincides with $v'$, namely the Gauss valuation
$v'_g$ (indeed, for such an extension $\mathrm{RV}(T)$ will be transcendental over $\mathrm{RV}(K')$, so this extension
is necessarily a Gauss extension of $v'$). Then it follows by a direct explicit computation that 
\[\mathrm{RV}(K'(T))=\mathrm{RV}(K(T))\otimes_{\mathrm{RV}(K)}\mathrm{RV}(K'),\]
(where
$K'$ is endowed with $v'$ and $K'(T)$ with
$v'_g$)
which implies that $K'(T)$ is a defectless extension of $K(T)$.

Let us first handle the case where $F$ is separable over $K(T)$. Let
$K'$ be the separable closure of $K$ in $F$. By the remark above, 
$K'(T)$ is a defectless extension of $K(T)$, and it is therefore
sufficient to prove that $F$ is a defectless extension of
$K'(T)$, thus we can assume that $K'=K$. 
The tensor product $L:=\overline K\otimes_K F$ is then a field, and $L$ is a defectless extension of $\overline K(T)$ since $\overline 
K(T)$ is defectless by assumption. Let $w_1,\ldots, w_d$ be the extensions of $w_g$ to $L$; for every $i$, let
$L_i$ be the valued field $(L,w_i)$. We have by assumption
\[[F:K(T)]=[L:\overline K(T)]=\sum_i  \, [\mathrm{RV}(L_i):\mathrm{RV}(\overline K(T))].\]
Now each $\mathrm{RV}(L_i)$ is a finite extension of $\mathrm{RV}(\overline K(T))$, so it is defined
over $\mathrm{RV}(E(T))$ for $E$ a suitable finite extension of $K$ contained 
in $\overline K$, which can be chosen to
work for all $i$.  Let us 
set
\[E_i=\mathrm{RV}(F\otimes_K E, w_i|_{F\otimes_K E}).\]
By construction, $E_i$
ontains
a graded subfield of degree $[\mathrm{RV}(L_i):\mathrm{RV}(\overline K(T))]$
over $\mathrm{RV}(E(T))$, so that
we have
\[[F\otimes_K E:E(T)]=[F:K(T)]=\sum_i \, [\mathrm{RV}(L_i):\mathrm{RV}(\overline K(T))]
\leq \sum_i \, [E_i:\mathrm{RV}(E(T))].\]
Then 
\[[F\otimes_K E:E(T)]=\sum_i \, [E_i:\mathrm{RV}(E(T))]\]
and $F\otimes_K E$ is a defectless extension of $E(T)$. Moreover, 
$E(T)$ is a defectless extension of $K(T)$ by the remark at the beginning of the proof. 
Therefore $F\otimes_K E$ is a defectless extension of $K(T)$ as well, which in turn forces
$F$ to be defectless over $K(T)$. We thus are done when $F$ is separable
over $K(T)$. 

Now let us handle the general case. 
In order to prove that $F$ is defectless over $K(T)$ we may enlarge $F$, and so 
we can assume that it is normal over $K(T)$.  Let $F_0$ 
be the subfield of $F$ consisting of Galois-invariant elements. This is a purely inseparable
extension of $K(T)$, and $F$ is separable (and even Galois) over $F_0$. 
Since $F_0$ is a finite extension of $K(T)$, it is contained in $K_0(T^{1/p^m})$
for some integer $m$ and some purely inseparable finite extension $K_0$ of $K$ (indeed,
if $f\in K(T)$ then for every ~$\ell$ the $p^\ell$-th root $f^{1/p^{\ell}}$ is
contained in the radicial extension
generated by $T^{1/p^{\ell}}$ and the $p^{\ell}$-th roots
of the coefficients of $f$). 

It is now sufficient to prove that $F\otimes_{F_0}K_0(T^{1/p^m})$ (which is a field since
$F$ and $K_0(T^{1/p^m})$ are respectively separable
and purely inseparable over $F_0$) is defectless  over~$K(T)$. But $ F\otimes_{F_0}K_0(T^{1/p^m})$ is
separable over $K_0(T^{1/p^m})$, so it is 
defectless over $K_0(T^{1/p^m})$ by the above; and $K_0(T^{1/p^m})$ is defectless over 
$K(T)$ by direct computation, resting on the fact that $K_0$ is defectless over $K$, which ends this first step. 

\textit{We thus may and do assume from now on that $K$ is algebraically closed}.

\subsubsection{Reduction to the case of
a rational radius}
\label{subsubsection-rational-radius}
Let $F$ be a finite extension of $K(T)$,
and let 
$C$ be the normal projective $K$-curve
with function field $F$, equipped with 
the finite map $C\to \P^1_K$ inducing $K(T)\hookrightarrow F$. 
We want to prove that~$F$
is defectless over the valued
field $(K(T),v_g)$ and our purpose now is
to reduce to the case where $g$ belongs to $v(K^\times)$. 

Let us fix a non-trivially valued, algebraically closed extension $L$ of~$K$ whose
value group contains
$v(K^\times)+\Z g$; let $v_L$ denote the valuation of $L$.
We are going to prove that $F_L:=F\otimes_{K(T)}L(T)$
is defectless over $(L(T),v_{L,g})$ 
if and only if $F$ is defectless over $K(T)$, which will allow
 to replace $(K,v)$ with $(L,v_L)$ and thus assume that
$K$ is non-trivially valued (in other words, $K$
is a model of $\acvf$) and $g\in v(K^\times)$.

Let $w$ be any extension of~$v_{L,g}$ to~$F_L$; 
in what follows, $F_L$ and its subfields are understood as endowed with (the restriction of) $w$. 
The valuation $w$ on $F_L$ defines a type on $C_L$ over $L$, whose image on $\P^1_L$ is by design
the generic type on the closed ball of valuative radius $g$ (centered at the origin). This type is thus strongly
stably dominated and definable over $K\cup\{g\}$, see \cite{HL}, Proposition 8.1.2. 

Let~$E$ be a finite dimensional~$K$-vector subspace of~$F$.
It follows from the above that the restriction of~$w$ to~$L\otimes_K E$ is a norm which is definable with parameters
in~$K\cup\{g\}$, once a $K$-basis of $E$ is chosen. 
Otherwise said, identifying a norm on $E$ with its unit ball, there exists a $K$-definable function $\Phi$ from 
$\Gamma$ to the set of lattices of $E$ such that $w|_{L\otimes_KE}=\Phi(g)$. In view of the general description of such a 
$\Phi$
provided by \cite{HL}, Lemma 6.2.2,
this implies the existence of a basis~$e_1,\ldots, e_d$ of~$E$ over~$K$
and elements~$h_1,\ldots, h_d$ of 
$v(K^\times)\oplus \mathbb{Q} g$ such
that
\[(1)\;\;\;w\Bigl(\sum a_i e_i\Bigr)=\min v(a_i)+h_i\] for every $d$-uple
$(a_i)\in L^d$. 
Note that one thus has
\[(2)\;\;\;w(x)=\max_{x=\sum a_i\otimes y_i}\min_i (v(a_i)+w(y_i))\]
for all $x\in L\otimes_K E$.

It immediately follows 
from (1) that the \emph{graded}
reduction $\mathrm{RV}(L\otimes_K E)$ is
equal to~
$\mathrm{RV}(L)\otimes_{\mathrm{RV}(K)}
\mathrm{RV}(E)$.
A limit argument then 
shows that
$\mathrm{RV}(F_L)$ is 
nothing but the graded fraction field
of
$\mathrm{RV}(L)\otimes_{\mathrm{RV}(K)}\mathrm{RV}(F)$. As
$\mathrm{RV}(L(T))$ is
itself equal by a direct computation to the graded fraction field
of
the graded domain $\mathrm{RV}(L)\otimes_{
\mathrm{RV}(K)}\mathrm{RV}(K(T))$, 
we eventually get
\[\mathrm{RV}(F_L)=\mathrm{RV}(L(T))\otimes_{
\mathrm{RV}(K(T))}\mathrm{RV}(F).\]
In particular we have the equality
\[(3)\;\;\;[\mathrm{RV}
(F_L):\mathrm{RV}(L(T))]=[\mathrm{RV}(F):\mathrm{RV}(K(T)].\]

This holds for all extensions $w$ of $v_{L,g}$ to $F_L$ (we remind that 
$w$ is implicitly involved in the above equality). Let $\mathcal P$, \resp $\mathcal P_L$, be the set of extensions
of $v_g$ to $F$, \resp of $v_{L,g}$, to $F_L$. There is a natural restriction map
from $\mathcal P_L$ to $\mathcal P$, which is injective since 
formula (2) above ensures that any $w\in \mathcal P_L$
is uniquely determined by its restriction to $F$. 
We claim that this map is surjective as well. Indeed,
to see this, we may enlarge $F$ and assume it is Galois over $K(T)$. 
Now let $\omega\in \mathcal P$ and let $w$ be an arbitrary element of $\mathcal P_L$. 
The restriction $w|_F$ belongs to $\mathcal P$, so is equal to $\omega\circ \phi$
for some $\phi\in \mathrm{Gal}(F/K(T))$. Then $w\circ \phi^{-1}$ is a preimage
of $\omega$ in $\mathcal P_L$.

Therefore $\mathcal P_L\to \mathcal P$ is bijective. In view of $(3)$ above, this implies that
$F$ is a defectless
extension of~$(K(T), v_g)$ if and only if~$F_L$ is a defectless extension of~$(L(T),v_{L,g})$, 
as announced. 

\textit{Hence we may and do
assume from
now on that~$g\in v(K^\times)$ and that $K$ is a model of $\acvf$}.

\subsubsection{Some specialisations}
\label{subsubsection-specialisation}Let~$h\in v(K^\times)$.  Let us choose~$\lambda\in K$ such that~$v(\lambda)=h$ and let~$\tau$ be the image of~$T/\lambda$ in the residue field $k$ of~$(K(T),v_h)$; note that~$k=\mathsf{res}(K)(\tau)$, and that 
$\tau$ is transcendental over $\mathsf{res}(K)$.
 Let~$h^- $ and~$h^+$ be elements of an abelian ordered group containing~$v(K^\times)$ which are infinitely close to~$h$ (with respect to~$v(K^\times)$), with 
$h^-<h<h^+$. The valuation $v_{h^-}$, \resp
$v_{h^+}$ is the composition of $v_h$ and of the discrete valuation $u_\infty$, resp.~$u_0$, of $k$ that corresponds to~$\tau=\infty$, \resp $\tau=0$, and the extensions of $v_{h^-}$, \resp $v_{h^+}$, to~$F$ are compositions of extensions of~$v_h$ and of extensions of
$u_\infty$, \resp $u_0$. Since~$(k, u_0)$ and $(k, u_\infty)$ are defectless,
we see that the following are equivalent : 

\begin{enumerate}[i]
\item $F$ is a defectless extension of~$(K(T), v_{h^-})$ ;

\item $F$ is a defectless extension of~$(K(T),v_h)$ ; 

\item $F$ is a defectless extension of~$(K(T),v_{h^+})$.  
\end{enumerate}

In the same spirit, let~$\theta$ be
an element of an
abelian ordered group containing~$v(K^\times)$ and larger than any element of 
$v(K^\times)$. The valuation~$v_\theta$ is the composition of the discrete valuation $\omega$ of~$K(T)$ corresponding to the closed point~$T=0$ and of the valuation of~$K$. Since both~$(K,v)$ and
$(K(T),\omega)$ are defectless, $(K(T), v_\theta)$ is defectless; in particular, $F$ is a defectless extension of~$(K(T),v_\theta)$.

\subsubsection{Definability of the defectless locus}
\label{defectless-definable}
Our purpose is now to prove the existence of a 
$K$-definable subset $D\subset \Gamma$ such that for
every model $(L,w)$ of $\acvf$ containing $K$ and every $h\in 
w(L^\times)$, the extension $F_L$ of $(L(T),w_h)$ is
defectless if and only if $h\in D(L)$. We first note that in view
of 
\ref{subsubsection-specialisation}, $F_L$ is a defectless
extension of $(L(T),w_h)$ if and only if it is a defectless 
extension of $(L(T), w_h^{+})$, and it is the latter property we shall
focus on. 

Let~$X$ be an irreducible, smooth, projective curve over 
$K$ whose function field is isomorphic to $F$, and such that~$K(T)\hookrightarrow F$ is induced by  a finite map~$f\colon X\to \P^1_K$; the latter induces
a map $\widehat  f\colon \widehat X\to \widehat{\P^1_K}$.
It
follows from Riemann-Roch that there exists a line bundle $\mathscr L$ on~$X$ such 
that the quotients $s/t$ for $s$ and $t$ running through 
the set of non-zero global sections
of $\mathscr L$ generates $K(X)^\times$ universally
(see \cite{HL}, 7.1; this is the key input for the proof therein that $\widehat X$
is definable, and not merely pro-definable). 

We identify $\Gamma$ with the standard skeleton $\Sigma_1
\subset \widehat{\P^1_K}$; let $\Delta$ be its pre-image in 
$\widehat X$. The set $\Delta$ is $K$-definable and $\Gamma$-internal
(this follows directly from the definability of $\widehat X$ and $\widehat{\P^1_K}$
and the fact that $\widehat X\to \widehat{\P^1_K}$ has finite fibers,
with no need to invoke Theorem \ref{theo-union-skeleta}).
There exists a finite $K$-definable set $S\subset \Delta$ such that 
$\Delta\setminus S$ is a disjoint union $\coprod
_{I\in \mathscr I} I$ of definably open intervals, each of which maps
homeomorphically onto a definable open interval in 
$\Gamma$ (and is equipped with the orientation and the metric
inherited from
$\Gamma$). 

For every $\omega\in \Delta$, we denote by $\mathscr I(\omega)$ the
subset of $\mathscr I$ consisting of those intervals $I$ such that $\omega\in I$
or $\omega$ is the left endpoint of $I$. 
For every $I\in \mathscr I(\omega)$, we denote by $s(I,\omega)$ the
set of all
possible slopes of $\val(s/t)$ for $s$ and $t$ non-zero global sections of
$\mathscr L$ along
the germ of branch emanating rightward from $\omega$
and induced by $I$.
By finite-dimensionality of
$\H^0(X,\mathscr L)$ all sets $s(I,\omega)$
are finite
and the asssignment $\omega \mapsto (\mathscr I(\omega), 
(s(I,\omega))_{I\in \mathscr I(\omega)})$ is $K$-definable. 

Let $(L,w)$
be a model of $\acvf$ containing $K$, 
let $\omega\in \Delta(L)$ and let $I\in \mathscr I(\omega)$. 
The germ of branch emanating rightward from $\omega$
and induced by $I$ defines a valuation $v(I,\omega)$ refining 
$\omega$. The image of $\omega$ in $\widehat{\P^1}(L)$ 
is equal to $w_h$ for some
$h\in w(L^\times)$; thus $v(I,\omega)$ lies above $w_{h^+}$. 
The ramification index $e(I,\omega)$
of 
$v(I,\omega)$ over $w_{h^+}$ is the greatest $N>0$ such that there exists a 
non-zero $L$-rational function on $X$ whose valuation has slope $1/N$ 
along the germ of branch emanating rightward from $\omega$
and induced by $I$. But since the group of non-zero rational functions on
$X$ is universally generated by quotients of non-zero global sections of $\mathscr L$, this integer $e(I,\omega)$
can be read off from the finite set of slopes $s(I,\omega)$ (it is nothing but the lcm of their denominators). 

Now $F_L$ is a defectless extension of $(L(T), w_{h^+})$ if and only if the sum
of all the ramification indexes of $v(I,\omega)$ for $\omega$ above $w_h$
and $I\in \mathscr I(\omega)$ is equal to $[F:K(T)]$.
Thus whether $F_L$ is a defectless extension of $(L(T),w_{h^+})$ or not 
can be read off from the sets of slopes $s(I,\omega)$ for $\omega$
above $w_h$ and $I\in \mathscr I(\omega)$; the existence of the required
$K$-definable set $D$ follows immediately.

\subsubsection{Conclusion}

Our purpose is to prove that $F$ is a defectless extension of
$(K(T),v_g)$ or, in other words, that $g\in D(K)$, and we are in fact
going to prove that $D$ is the whole of $\Gamma$. For this, it suffices to show
that $D$ is both definably open and definably closed and non-empty.

\paragraph{The set $D$ is both definably open and definably closed}\label{d-openandclosed}
Let $h\in v(K^\times)$, and let
$(L,w)$ be a model of $\acvf$ containing $K$ and
such that $w(L^\times)$ contains two elements $h^+$
and $h^-$ infinitely close to $h$ with respect to $v(K^\times)$
and with $h^-<h<h^+$. In view of \ref{subsubsection-specialisation}, 
$F$ is a defectless extension of $(K(T),v_h)$ if and only if 
it is a defectless extension of $(K(T),v_{h^+})$, if and only if
it is a defectless extension of $(K(T),v_{h^-})$. Using
\ref{subsubsection-rational-radius}, this implies that 
$F$ is a defectless extension of $(K(T),v_h)$ if and only if 
$F_L$ is a defectless extension of $(L(T),w_{h^+})$, if and only if
$F_L$ is a defectless extension of $(L(T),w_{h^-})$. Hence
if $h\in D(K)$ then $h^+$ and $h^{-}$ belong to
$D(L)$, and if $h$ belongs to $(\Gamma\setminus D)(K)$, then 
$h^{-}$ and $h^{+}$ belong to  $(\Gamma\setminus D)(L)$. This shows
that both $D$ and its complement in $\Gamma$ are definably open, hence
$D$ is both definably open and definably closed. 

\paragraph{The set $D$ is non-empty}\label{d-nonempty}
Now let $(L,w)$ be a model of $\acvf$ containing $K$ such that $w(L^\times)$ contains
an element $\theta$ larger that any element of $v(K^\times)$. 
We have seen in \ref{subsubsection-specialisation}
that $F$ is a defectless extension of $(K(T),v_\theta)$. Thus
by \ref{subsubsection-rational-radius} $F_L$ is a defectless extension of $(L(T),w_\theta)$. Hence $\theta\in D(L)$ and $D$ is non-emtpy.
\end{proof}

\bibliographystyle{smfalpha}
\bibliography{paper}

\providecommand{\bysame}{\leavevmode ---\ }
\providecommand{\og}{``}
\providecommand{\fg}{''}
\providecommand{\smfandname}{\&}
\providecommand{\smfedsname}{\'eds.}
\providecommand{\smfedname}{\'ed.}
\providecommand{\smfmastersthesisname}{M\'emoire}
\providecommand{\smfphdthesisname}{Th\`ese}
\begin{thebibliography}{HHM06}

\bibitem[Ber99]{berkovich1999}
{\scshape V.~Berkovich} -- {\og Smooth {$p$}-adic analytic spaces are locally
  contractible\fg}, \emph{Invent. Math.} \textbf{137} (1999), no.~1, p.~1--84.

\bibitem[Bey75]{beynon}
{\scshape W.~M. Beynon} -- {\og Duality theorems for finitely generated vector
  lattices\fg}, \emph{Proc. London Math. Soc. (3)} \textbf{31} (1975),
  p.~114--128.

\bibitem[CHY]{BP}
{\scshape P.~{Cubides Kovacsics}, M.~Hils {\normalfont \smfandname} J.~Ye} --
  {\og Beautiful pairs\fg}, ar{X}iv\string:2112.00651.

\bibitem[CLD]{chambertloir-d2012}
{\scshape A.~Chambert-Loir {\normalfont \smfandname} A.~Ducros} -- {\og Formes
  diff{\'e}rentielles r{\'e}elles et courants sur les espaces de
  {B}erkovich\fg}, ar{X}iv\string:1204.6277.

\bibitem[Duc12]{confluentes}
{\scshape A.~Ducros} -- {\og Espaces de {B}erkovich, polytopes, squelettes et
  th\'eorie des mod\`eles\fg}, \emph{Confluentes Math.} \textbf{4} (2012),
  no.~4, p.~57pp.

\bibitem[Duc13]{bbk}
\bysame , {\og Les espaces de {B}erkovich sont mod\'{e}r\'{e}s (d'apr\`es
  {E}hud {H}rushovski et {F}ran\c{c}ois {L}oeser)\fg}, no. 352, 2013,
  S\'{e}minaire Bourbaki. Vol. 2011/2012. Expos\'{e}s 1043--1058, Exp. No.
  1056, p.~459--507.

\bibitem[Duc16]{simons}
\bysame , {\og About {H}rushovski and {L}oeser's work on the homotopy type of
  {B}erkovich spaces\fg}, in \emph{Nonarchimedean and tropical geometry},
  Simons Symp., Springer, 2016, p.~99--131.

\bibitem[Ele18]{elef}
{\scshape P.~Eleftheriou} -- {\og Semilinear stars are contractible\fg},
  \emph{Fund. Math.} \textbf{241} (2018), no.~3, p.~291--312.

\bibitem[Gla99]{glass}
{\scshape A.~M.~W. Glass} -- \emph{Partially ordered groups}, Series in
  Algebra, vol.~7, World Scientific Publishing Co., Inc., River Edge, NJ, 1999.

\bibitem[Gru68]{gruson}
{\scshape L.~Gruson} -- {\og Fibr\'{e}s vectoriels sur un polydisque
  ultram\'{e}trique\fg}, \emph{Ann. Sci. \'{E}cole Norm. Sup. (4)} \textbf{1}
  (1968), p.~45--89.

\bibitem[HHM06]{haskell-h-m2006}
{\scshape D.~Haskell, E.~Hrushovski {\normalfont \smfandname} D.~Macpherson} --
  {\og Definable sets in algebraically closed valued fields: elimination of
  imaginaries.\fg}, \emph{J. Reine Angew. Math.} \textbf{597} (2006),
  p.~175--236.

\bibitem[HK06]{HK}
{\scshape E.~Hrushovski {\normalfont \smfandname} D.~Kazhdan} -- {\og
  Integration in valued fields\fg}, in \emph{Algebraic geometry and number
  theory}, Progr. Math., vol. 253, Birkh\"{a}user Boston, Boston, MA, 2006,
  p.~261--405.

\bibitem[HK08]{HK2}
\bysame , {\og The value ring of geometric motivic integration, and the
  {I}wahori {H}ecke algebra of {${\rm SL}_2$}\fg}, \emph{Geom. Funct. Anal.}
  \textbf{17} (2008), no.~6, p.~1924--1967, With an appendix by Nir Avni.

\bibitem[HL16]{HL}
{\scshape E.~Hrushovski {\normalfont \smfandname} F.~Loeser} --
  \emph{Non-archimedean tame topology and stably dominated types}, Annals of
  Mathematics Studies, vol. 192, Princeton University Press, Princeton, NJ,
  2016.

\bibitem[Kuh10]{kuhl}
{\scshape F.-V. Kuhlmann} -- {\og Elimination of ramification {I}: the
  generalized stability theorem\fg}, \emph{Trans. Amer. Math. Soc.}
  \textbf{362} (2010), no.~11, p.~5697--5727.

\bibitem[Ohm89]{ohm1989}
{\scshape J.~Ohm} -- {\og The {H}enselian defect for valued function
  fields\fg}, \emph{Proc. Amer. Math. Soc.} \textbf{107} (1989), no.~2,
  p.~299--308.

\bibitem[Ovc02]{Ovchinnikov}
{\scshape S.~Ovchinnikov} -- {\og Max-min representation of piecewise linear
  functions\fg}, \emph{Beitr\"{a}ge Algebra Geom.} \textbf{43} (2002), no.~1,
  p.~297--302.

\bibitem[Sim15]{simon}
{\scshape P.~Simon} -- \emph{A guide to {NIP} theories}, Lecture Notes in
  Logic, vol.~44, Association for Symbolic Logic, Chicago, IL; Cambridge
  Scientific Publishers, Cambridge, 2015.

\bibitem[Tei14]{teissier}
{\scshape B.~Teissier} -- {\og Overweight deformations of affine toric
  varieties and local uniformization\fg}, in \emph{Valuation theory in
  interaction}, EMS Ser. Congr. Rep., Eur. Math. Soc., Z\"{u}rich, 2014,
  p.~474--565.

\bibitem[Tem04]{temkin2004}
{\scshape M.~Temkin} -- {\og On local properties of non-{A}rchimedean analytic
  spaces. {II}\fg}, \emph{Israel J. Math.} \textbf{140} (2004), p.~1--27.

\bibitem[Tem10]{temkin2010}
\bysame , {\og Stable modification of relative curves\fg}, \emph{J. Algebraic
  Geom.} \textbf{19} (2010), no.~4, p.~603--677.

\end{thebibliography}

\end{document}